\theoremstyle{definition}
\newtheorem{lemma}{Lemma}[section]
\numberwithin{equation}{section}
\numberwithin{figure}{section}
\title{Contrast-independent partially explicit time discretizations for nonlinear multiscale problems}
\author{Eric T. Chung \footnote{Department of Mathematics, The Chinese University of Hong Kong (CUHK), Hong Kong SAR}, ~Yalchin Efendiev\footnote{Department of Mathematics, Texas A\&M University, College Station, TX 77843, USA \& North-Eastern Federal University, Yakutsk, Russia}, ~ Wing Tat Leung\footnote{Department of Mathematics, University of California, Irvine, USA}, ~ Wenyuan Li\footnote{Department of Mathematics, Texas A\&M University, College Station, TX 77843, USA}}
\begin{document}
\maketitle

\section*{Abstract}

This work continues a line of works on developing partially explicit methods
for multiscale problems. In our previous works, we have considered
linear multiscale problems, where  the spatial heterogeneities are
 at subgrid level and are not resolved. In these works, 
we have introduced contrast-independent partially explicit time 
discretizations for linear equations. 
The contrast-independent partially explicit time discretization 
 divides the spatial space into two components: contrast dependent (fast) and contrast independent (slow) spaces defined via multiscale space decomposition. 
Following this decomposition, temporal splitting is proposed that treats
fast components implicitly and slow components explicitly. The 
space decomposition and temporal splitting are chosen such that it guarantees
a stability and formulate a condition for the time stepping.
This condition is formulated as a condition on slow spaces. 
In this paper, we extend this approach to nonlinear problems.
We propose a splitting approach and
derive a condition that guarantees stability.
 This condition requires some type of contrast-independent 
spaces for slow components of the solution.
We present numerical results and show that the proposed methods provide results similar to implicit methods with the time step that is independent of the contrast.

\section{Introduction}

Nonlinear problems arise in many applications and they are typically described
by some nonlinear partial differential equations. In many applications,
these problems have multiscale nature and contain multiple scales
and high contrast. Examples include nonlinear porous media flows
(e.g., Richards' equations, Forchheimer flow, and so on, e.g.,
 \cite{ehlers2020darcy,bear2013dynamics}), 
where the media
properties contain many spatial scales and high contrast. 
Because of high contrast
in the media properties, these processes also occur on multiple times scales.
E.g.,
for nonlinear diffusion, the flow can be fast in
high conductivity regions, while the flow is slow 
in low conductivity regions. Because
of disparity of time scales, special temporal discretizations are
often sought, which is the main goal of the paper
in the context of multiscale problems.

When the media properties are high, the flow and transport become
fast and requires small time step to resolve the dynamics. Implicit 
discretization
can be used to handle fast dynamics; however, this requires
solving large-scale nonlinear systems. For nonlinear problems, explicit methods
are used when possible to avoid solving nonlinear systems.
 The main drawback of explicit methods is that they
require small time steps that scale as the fine mesh and depend
on physical parameters, e.g., the contrast. To alleviate this issue, 
we propose
a novel nonlinear splitting algorithm following our earlier works
\cite{chung_partial_expliict21,chung_partial_expliict_wave21}
 for
linear equations. The main idea of our approaches is to use multiscale
methods on a coarse spatial grid such that the time step scales as
the coarse mesh size.

Next, we give a brief overview of multiscale methods for spatial
discretizations that are used in our paper.
Multiscale spatial
algorithms have been extensively studied for linear and nonlinear problems.
For linear problems, many multiscale methods have been developed.
These include
homogenization-based approaches \cite{eh09,le2014msfem}, 
multiscale
finite element methods \cite{eh09,hw97,jennylt03}, 
generalized multiscale finite element methods (GMsFEM) \cite{chung2016adaptiveJCP,MixedGMsFEM,WaveGMsFEM,chung2018fast,GMsFEM13}, 
constraint energy minimizing GMsFEM (CEM-GMsFEM) 
\cite{chung2018constraint, chung2018constraintmixed}, nonlocal
multi-continua (NLMC) approaches \cite{NLMC},
metric-based upscaling \cite{oz06_1}, heterogeneous multiscale method 
\cite{ee03}, localized orthogonal decomposition (LOD) 
\cite{henning2012localized}, equation-free approaches \cite{rk07,skr06}, 
multiscale stochastic approaches \cite{hou2017exploring, hou2019model, hou2018adaptive},
and hierarchical multiscale method \cite{brown2013efficient}.
For high-contrast problems, 
approaches such as GMsFEM and NLMC are developed.
For example, in the  GMsFEM 
\cite{chung2018constraint}, multiple basis
functions or continua are designed to capture the multiscale features
due to high contrast
\cite{chung2018constraintmixed, NLMC}. 
These approaches require a careful design of multiscale
dominant modes. For nonlinear problems, linear multiscale basis functions
can be replaced by nonlinear maps \cite{ep03a,ep03c,efendiev2014generalized}.

Our proposed approaches follow concepts developed in 
\cite{chung_partial_expliict21,chung_partial_expliict_wave21} 
for linear equations. In these works, we design 
splitting algorithms for solving flow and wave equations. 
In both cases, the solution space is divided into two parts, coarse-grid
part, and the correction part. Coarse-grid solution is computed using
multiscale basis functions with CEM-GMsFEM. The correction part 
uses special spaces in the complement space (complement to the coarse space). 
A careful choice of these spaces guarantees that
the method is stable. Our analysis in \cite{chung_partial_expliict21,chung_partial_expliict_wave21}
shows that for the stability, the correction
space should be free of contrast and thus, this requires a special multiscale
space construction. These splitting algorithms take their origin from earlier
works \cite{marchuk1990splitting,VabishchevichAdditive}. 
In this paper, we extend the linear concepts to nonlinear problems.

Splitting algorithms for nonlinear problems have often been used in 
the literature.
Earlier approaches include implicit-explicit approaches and other techniques
\cite{ascher1997implicit,li2008effectiveness,abdulle2012explicit,engquist2005heterogeneous,ariel2009multiscale,narayanamurthi2019epirk,shi2019local,duchemin2014explicit,frank1997stability,izzo2017highly,ruuth1995implicit,hundsdorfer2007imex,du2019third}. In many approaches, nonlinear contributions are roughly
divided into
two parts depending on whether it is
 easy to  implicitly solve discretized system.
For easy to solve part, implicit discretization is used, while
for the rest, explicit discretization is used.
However, in general, one can not separate these parts for the problems
under the consideration.  
 Our goal is to use
splitting concepts and treat implicitly and explicitly some
parts of the solution. As a result, we can use larger time steps
that scale as the coarse mesh size.

Our approach starts a nonlinear dynamical system
\[
u_t + f(u) + g(u)=0,
\]
where $f(u)$ represents diffusion-like operator, while $g(u)$ represents 
reaction-like terms.
In linear problems, for the stability, we formulate a condition
that involves the time step, the energy and $L^2$ norm of the solution
in complement space. This is a constraint for the time step. With an
appropriate choice of the complement space, this condition guarantees
the stability for the time steps that scale as the coarse mesh size.
To obtain similar conditions for nonlinear problems, we carry out
the analysis for nonlinear $f(u)$ and $g(u)$ functions. 
The analysis reveals conditions that are required for stability.
The conditions of multiscale spaces turn out to share some
 similarities to those
for nonlinear multiscale methods \cite{efendiev2014generalized}.
We remark several observations. 

\begin{itemize}

\item Additional degrees of freedom  are needed for dynamic problems,
in general, to handle
missing information.

\item We note that restrictive time step scales as the coarse mesh
size and, thus, much coarser.

\end{itemize}

We present several  numerical results. 
In our numerical results, we consider two cases.
In the first case, we take the reaction term, $g(u)$,
to be nonlinear, while the diffusion term, $f(u)$,
to be linear. In the second case, we consider
both to be nonlinear and use for $f(u)$ the form
$f(u)=-div(\kappa(x,u)\nabla u)$.
The media properties for the diffusion is taken to be heterogeneous
and we choose smooth and singular source terms.
We compare the proposed approach to the approach, where all
degrees of freedom are handled implicitly.
We show that the proposed methods provide an approximation 
similar to fully implicit methods.

The paper is organized as follows. In the next section, we provide
some preliminaries. In Section 3, we present our main assumptions
and stability estimates for fine-grid problem. In Section 4, we
describe the partially explicit method and its stability, following
some discussions in Section 5.
Numerical results are presented in Section 6.

\section{Problem Setting}

We consider the following equation 
\begin{align}
u_{t}=-f(u)-g(u), \label{eq1}
\end{align}
where $f=\cfrac{\delta F}{\delta u}$ and $g=\cfrac{\delta G}{\delta u}$ which
are the variational derivative of energies $F(u):=\int_{\Omega}E_{1}(u)$
and $G(u):=\int_{\Omega}E_{2}(u)$.
Here, $f$ is assumed to be contrast dependent nonlinear (or linear) (i.e.,
$f$ introduces stiffness in the system)
while $g$ is contrast independent (i.e., $g$ does not introduce stiffness).

We assume $f(u)\in V^{*}$ and $g(u)\in L^{2}(\Omega)$ for all $u\in V$.
We can then consider the weak formulation of the problem, namely,
finding $u\in V$ such that
\[
(u_{t},v)=-(f(u),v)_{V^{*},V}\;\forall v\in V.
\]
To simplify the notation, we will simply write $(\cdot,\cdot)$ instead
of $(\cdot,\cdot)_{V^{*},V}$ in the following discussion.

Example 1: For $F(u)=\cfrac{1}{2}\int_{\Omega}\kappa|\nabla u|^{2}$
and $G(u)=0$, we will have $f(u')=\cfrac{\delta F}{\delta u}\in H^{1}(\Omega)=(H^{1}(\Omega))^{*}$
for $V=H^{1}(\Omega)$.
\[
(\cfrac{\delta F}{\delta u}(u'),v)=\int_{\Omega}\kappa\nabla u'\cdot\nabla v,
\]
 and thus, we have 
\[
-f(u')=\nabla\cdot(\kappa\nabla u').
\]
In strong form, we have
\[
u_{t}=\nabla\cdot(\kappa\nabla u).
\]

Example 2: For $F=\cfrac{1}{p}\int\kappa|\nabla u|^{p}$ and $G(u)=0$,
we will have $f(u')=\cfrac{\delta F}{\delta u}\in W^{1,\frac{p}{p-1}}(\Omega)=(W^{1,p}(\Omega))^{*}$
for $V=W^{1,p}(\Omega)$
\[
(\cfrac{\delta F}{\delta u}(u'),v)=\int\kappa|\nabla u'|^{p-2}\nabla u'\cdot\nabla v,
\]
and thus, we have 
\[
-f(u')=\nabla\cdot(\kappa|\nabla u'|^{p-2}\nabla u').
\]
In strong form, we have
\[
u_{t}=\nabla\cdot(\kappa|\nabla u|^{p-2}\nabla u).
\]

In the following discussion, we will make the following assumptions
on the second variational derivative of $F$ and $G$.
\begin{itemize}
\item The second variational derivative $\delta^{2}F$
and $\delta^{2}G$ satisfy
\[
\delta^{2}F(u)(v,v)\geq c(u)\|v\|_{V}^{2}\;\forall u,v\in V
\]
\[
\delta^{2}G(u)(v,v)\geq b(u)\|v\|^{2}\;\forall u,v\in V,
\]
where $0\leq c(u)<\infty$ and $-\underline{b}\leq b(u)<\infty$ are
independent on $v$.
\item The second variational derivative $\delta^{2}F$
and $\delta^{2}G$ are bounded. That is,
\[
|\delta^{2}F(u)(w,v)|\leq C(u)\|v\|_{V}\|w\|_{V}\;\forall u,v,w\in V
\]
\[
|\delta^{2}G(u)(w,v)|\leq B\|v\|_{L_{2}}\|w\|_{L_{2}}\;\forall u,v,w\in V,
\]
where $0<C(u)<\infty$ and $0<B<\infty$ are independent on $v,w$.
\end{itemize}

\section{Discretization}

To solve the problem, a standard method is finite element
approach. We can consider the numerical solution $u_{H}(t,\cdot)\in V_{H}$
satisfies 
\begin{align}
(u_{H,t},v)=-(f(u)+g(u),v)\;\forall v\in V_{H}, \label{eq:CEM_problem}
\end{align}
where $V_{H}$ is a finite element space in $V$. 

For the time discretization, we can consider two simplest discretizations
which are forward Euler and backward Euler methods.
For the forward Euler method, we consider $\{u_{H}^{k}\}_{k=0}^{N}\subset V_{H}$
such that 
\[
(\cfrac{u_{H}^{n+1}-u_{H}^{n}}{\Delta t},v)+(f(u_{H}^{n})+g(u_{H}^{n}),v)=0\;\forall v\in V_{H}.
\]
For the backward Euler method, we consider $\{u_{H}^{k}\}_{k=0}^{N}\subset V_{H}$
such that 
\[
(\cfrac{u_{H}^{n+1}-u_{H}^{n}}{\Delta t},v)+(f(u_{H}^{n+1})+g(u_{H}^{n+1}),v)=0\;\forall v\in V_{H}.
\]
Next, we would like to derive  stability conditions for backward and forward
 Euler methods.

Since
\[
F(u_{H}^{n})=F(u_{H}^{n+1})-(f(u_{H}^{n+1}),u_{H}^{n+1}-u_{H}^{n})+\cfrac{1}{2}\delta^{2}F(\xi_{1}^{n})(u_{H}^{n+1}-u_{H}^{n},u_{H}^{n+1}-u_{H}^{n})
\]
and 
\[
G(u_{H}^{n})=G(u_{H}^{n+1})-(g(u_{H}^{n+1}),u_{H}^{n+1}-u_{H}^{n})+\cfrac{1}{2}\delta^{2}G(\xi_{2}^{n})(u_{H}^{n+1}-u_{H}^{n},u_{H}^{n+1}-u_{H}^{n})
\]
for some $\xi_{i}^{n}=(1-\lambda_{i})u_{H}^{n+1}+\lambda_{i}u_{H}^{n}$
with $\lambda_{i}\in(0,1)$ and $i=1,2$, we have
\begin{align*}
0= & (\cfrac{u_{H}^{n+1}-u_{H}^{n}}{\Delta t},u_{H}^{n+1}-u_{H}^{n})+(f(u_{H}^{n+1})+g(u_{H}^{n+1}),u_{H}^{n+1}-u_{H}^{n})\\
= & \cfrac{1}{\Delta t}\|u_{H}^{n+1}-u_{H}^{n}\|^{2}+F(u_{H}^{n+1})-F(u_{H}^{n})+G(u_{H}^{n+1})-G(u_{H}^{n})\\
 & +\cfrac{1}{2}\delta^{2}F(\xi_{1}^{n})(u_{H}^{n+1}-u_{H}^{n},u_{H}^{n+1}-u_{H}^{n})+\cfrac{1}{2}\delta^{2}G(\xi_{2}^{n})(u_{H}^{n+1}-u_{H}^{n},u_{H}^{n+1}-u_{H}^{n})\\
\geq & \Big(\cfrac{1}{\Delta t}+b(u)\Big)\|u_{H}^{n+1}-u_{H}^{n}\|^{2}+F(u_{H}^{n+1})-F(u_{H}^{n})+G(u_{H}^{n+1})-G(u_{H}^{n})+\cfrac{c(u)}{2}\|u_{H}^{n+1}-u_{H}^{n}\|_{V}^{2}.
\end{align*}
We have 
\begin{align*}
F(u_{H}^{n+1})+G(u_{H}^{n+1}) & \leq F(u_{H}^{n+1})+G(u_{H}^{n+1})+\cfrac{c(u)}{2}\|u_{H}^{n+1}-u_{H}^{n}\|_{V}^{2}+\Big(\cfrac{1}{\Delta t}-\underline{b}\Big)\|u_{H}^{n+1}-u_{H}^{n}\|^{2}\\
 & \leq F(u_{H}^{n})+G(u_{H}^{n})
\end{align*}
 for any $\Delta t$ and thus, backward Euler method is stable if
$\Delta t\underline{b}\leq1$.

Similarly, for forward Euler method, we can use
\[
F(u_{H}^{n+1})=F(u_{H}^{n})+(f(u_{H}^{n}),u_{H}^{n+1}-u_{H}^{n})+\cfrac{1}{2}\delta^{2}F(\xi_{1}^{n})(u_{H}^{n+1}-u_{H}^{n},u_{H}^{n+1}-u_{H}^{n})
\]
\[
G(u_{H}^{n+1})=G(u_{H}^{n})+(g(u_{H}^{n}),u_{H}^{n+1}-u_{H}^{n})+\cfrac{1}{2}\delta^{2}G(\xi_{2}^{n})(u_{H}^{n+1}-u_{H}^{n},u_{H}^{n+1}-u_{H}^{n})
\]
and obtain 
\begin{align*}
0= & (\cfrac{u_{H}^{n+1}-u_{H}^{n}}{\Delta t},u_{H}^{n+1}-u_{H}^{n})+(f(u^{n})+g(u_{H}^{n}),u_{H}^{n+1}-u_{H}^{n})\\
= & \cfrac{1}{\Delta t}\|u_{H}^{n+1}-u_{H}^{n}\|^{2}+F(u_{H}^{n+1})-F(u_{H}^{n})+G(u_{H}^{n+1})-G(u_{H}^{n})\\
 & -\cfrac{1}{2}\delta^{2}F(\xi_{1}^{n})(u_{H}^{n+1}-u_{H}^{n},u_{H}^{n+1}-u_{H}^{n})-\cfrac{1}{2}\delta^{2}G(\xi_{2}^{n})(u_{H}^{n+1}-u_{H}^{n},u_{H}^{n+1}-u_{H}^{n})\\
\geq &(\cfrac{1}{\Delta t}-B)\|u_{H}^{n+1}-u_{H}^{n}\|^{2}+F(u_{H}^{n+1})-F(u_{H}^{n})+ G(u_{H}^{n+1})-G(u_{H}^{n})-\cfrac{C(\xi^{n})}{2}\|u_{H}^{n+1}-u_{H}^{n}\|_{V}^{2}.
\end{align*}
Therefore, if $\Delta t\Big(\cfrac{C(\xi)}{2}\cfrac{\|u_{H}^{n+1}-u_{H}^{n}\|_{V}^{2}}{\|u_{H}^{n+1}-u_{H}^{n}\|^{2}}+B\Big)\leq1$
for any $\xi=(1-\lambda)u_{H}^{k+1}+\lambda u_{H}^{k}$ with $0\leq k\leq N-1$,
we have
\begin{align*}
F(u_{H}^{n+1})+G(u_{H}^{n+1}) & \leq F(u_{H}^{n})+G(u_{H}^{n}).
\end{align*}

We can see that although forward Euler method is easier for implementation,
we require a small time step for stability if $\sup_{v\in V_{H}}\cfrac{\|v\|_{V}^{2}}{\|v\|^{2}}$
or $C(\xi)$ is large.

We remark that in typical cases we will consider  $\underline{b}$
and $B$ are not too large. Therefore, we have $\Delta t\underline{b}$
and $\Delta tB$ is small and  the energy $G$ will not affect
the stability too much.

\section{Partially explicit scheme with space splitting}

To obtain an efficient method, one can consider partially explicit
scheme by splitting finite element space. We consider $V_{H}$ is
a direct sum of two subspace $V_{H,1}$ and $V_{H,2}$, namely, $V_{H}=V_{H,1}\oplus V_{H,2}$.
The finite element solution is then satisfying 
\begin{align*}
(u_{H,1,t}+u_{H,2,t},v_{1})+(f(u_{H,1}+u_{H,2})+g(u_{H,1}+u_{H,2}),v_{1}) & =0\;\forall v_{1}\in V_{H,1},\\
(u_{H,1,t}+u_{H,2,t},v_{2})+(f(u_{H,1}+u_{H,2})+g(u_{H,1}+u_{H,2}),v_{2}) & =0\;\forall v_{2}\in V_{H,2},
\end{align*}
where $u_{H}=u_{H,1}+u_{H,2}$. We can use a partially explicit time
discretization. For example, we can consider 
\begin{align*}
(\cfrac{u_{H,1}^{n+1}-u_{H,1}^{n}}{\Delta t}+\cfrac{u_{H,2}^{n}-u_{H,2}^{n-1}}{\Delta t},v_{1})+(f(u_{H,1}^{n+1}+u_{H,2}^{n})+g(u_{H,1}^{n}+u_{H,2}^{n}),v_{1}) & =0\;\forall v_{1}\in V_{H,1},\\
(\cfrac{u_{H,1}^{n}-u_{H,1}^{n-1}}{\Delta t}+\cfrac{u_{H,2}^{n+1}-u_{H,2}^{n}}{\Delta t},v_{2})+(f(u_{H,1}^{n+1}+u_{H,2}^{n})+g(u_{H,1}^{n}+u_{H,2}^{n}),v_{2}) & =0\;\forall v_{2}\in V_{H,2}.
\end{align*}

\subsection*{Energy stability}
\begin{lemma}
\label{lem:lem1}
If 
\begin{equation}
\begin{split}
(f(u_{H}^{n+1})-f(u_{H,1}^{n+1}+u_{H,2}^{n}),u_{H}^{n+1}-u_{H}^{n})\leq\\
\cfrac{\bar{c}}{2}\|u_{H}^{n+1}-u_{H}^{n}\|_{V}^{2}+\Big(\cfrac{(1-\gamma)}{\Delta t}-(1+\gamma)\cfrac{B}{2}\Big)\sum_{i}\|u_{H,i}^{n+1}-u_{H,i}^{n}\|^{2},
\end{split}
\end{equation}
where $\bar{c}=\inf_{u\in V_{H}}c(u)$ and $\gamma=\sup_{v_{1}\in V_{H,1},v_{2}\in V_{H,2}}\cfrac{(v_{1},v_{2})}{\|v_{1}\|\|v_{2}\|}<1$,
we have 
\[
\cfrac{\gamma}{2\Delta t}\sum_{i}\|u_{H,i}^{n+1}-u_{H,i}^{n}\|^{2}+F(u_{H}^{n+1})+G(u_{H}^{n+1})\leq\cfrac{\gamma}{2\Delta t}\sum_{i}\|u_{H,i}^{n}-u_{H,i}^{n-1}\|^{2}+F(u_{H}^{n})+G(u_{H}^{n}).
\]
\end{lemma}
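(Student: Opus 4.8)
The plan is to reproduce, in the split setting, the energy computation already used above for backward and forward Euler: test each update equation against its own increment, add them, and convert the nonlinear pairings into energy differences via the Taylor (variational) expansions. First I would abbreviate the increments as $d_i^n := u_{H,i}^{n+1}-u_{H,i}^n$, so that $u_H^{n+1}-u_H^n = d_1^n+d_2^n$, and choose the test functions $v_1 = d_1^n \in V_{H,1}$ in the first equation and $v_2 = d_2^n \in V_{H,2}$ in the second. Adding the two resulting identities and clearing $\Delta t$, the diagonal terms contribute $\tfrac{1}{\Delta t}(\|d_1^n\|^2+\|d_2^n\|^2)$, the lagged off-diagonal terms contribute $\tfrac{1}{\Delta t}\big((d_2^{n-1},d_1^n)+(d_1^{n-1},d_2^n)\big)$, and the nonlinear part collapses to $(f(u_{H,1}^{n+1}+u_{H,2}^n)+g(u_H^n),\,d_1^n+d_2^n)$, since both equations carry the same argument inside $f$ and $g$.

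The second step rewrites this nonlinear pairing using the variational structure. For the reaction term I would use the forward-type expansion of $G$ about $u_H^n$, so that $(g(u_H^n),u_H^{n+1}-u_H^n)=G(u_H^{n+1})-G(u_H^n)-\tfrac12\delta^2G(\xi_2^n)(u_H^{n+1}-u_H^n,u_H^{n+1}-u_H^n)$. For the diffusion term the key device is to split $f(u_{H,1}^{n+1}+u_{H,2}^n)=f(u_H^{n+1})-\big(f(u_H^{n+1})-f(u_{H,1}^{n+1}+u_{H,2}^n)\big)$ and apply the backward-type expansion of $F$ to the first piece, giving $(f(u_H^{n+1}),u_H^{n+1}-u_H^n)=F(u_H^{n+1})-F(u_H^n)+\tfrac12\delta^2F(\xi_1^n)(u_H^{n+1}-u_H^n,u_H^{n+1}-u_H^n)$, while the second piece is \emph{precisely} the quantity controlled by the hypothesis of the lemma. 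This is the structural heart of the argument: the partially explicit evaluation of $f$ is absorbed exactly into the assumed bound.

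Then I would estimate the curvature terms using the stated assumptions on $\delta^2F$ and $\delta^2G$: the lower bound $\delta^2F(\xi_1^n)(d,d)\ge c(u)\|d\|_V^2\ge \bar{c}\|d\|_V^2$ with $d=u_H^{n+1}-u_H^n$, and the upper bound $\delta^2G(\xi_2^n)(d,d)\le B\|d\|^2$. The cross terms are handled through the definition of $\gamma$: since $|(v_1,v_2)|\le\gamma\|v_1\|\|v_2\|$ for $v_1\in V_{H,1},\,v_2\in V_{H,2}$, Young's inequality gives $(d_2^{n-1},d_1^n)+(d_1^{n-1},d_2^n)\ge-\tfrac\gamma2\sum_i\|d_i^n\|^2-\tfrac\gamma2\sum_i\|d_i^{n-1}\|^2$, and the same $\gamma$-estimate yields $\|d_1^n+d_2^n\|^2\le(1+\gamma)\sum_i\|d_i^n\|^2$, which is exactly what permits the $B$-contribution to be absorbed.

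Finally I would insert the hypothesis and collect terms. The $\tfrac{\bar{c}}{2}\|d\|_V^2$ generated by $\delta^2F$ cancels against the identical term on the right of the assumed inequality; the quantity $(1+\gamma)\tfrac{B}{2}\sum_i\|d_i^n\|^2$ produced from bounding $\tfrac{B}{2}\|d\|^2$ cancels against the matching term in the hypothesis; and the surviving $\tfrac{1-\gamma}{\Delta t}\sum_i\|d_i^n\|^2$ combines with the diagonal and cross-term contributions. A short arithmetic check shows the net coefficient of $\sum_i\|d_i^n\|^2$ is exactly $-\tfrac{\gamma}{2\Delta t}$ and that of $\sum_i\|d_i^{n-1}\|^2$ is $+\tfrac{\gamma}{2\Delta t}$, which upon rearrangement is the claimed inequality. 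I expect the only delicate point to be the bookkeeping: one must verify that the three designed constants appearing in the hypothesis ($\bar{c}$, the factor $1-\gamma$, and the factor $1+\gamma$ multiplying $B/2$) are tuned so that every term cancels and the surviving $\gamma/(2\Delta t)$ quantity retains the correct sign, making the estimate a genuine one-step energy decay modulo the lagged increment term.
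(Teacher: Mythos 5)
Your proposal is correct and follows essentially the same route as the paper's proof: you use the same test functions $v_i=u_{H,i}^{n+1}-u_{H,i}^{n}$, the same backward expansion of $F$ and forward expansion of $G$, the same splitting of $f(u_{H,1}^{n+1}+u_{H,2}^{n})$ so that the discrepancy term is exactly the quantity controlled by the hypothesis, and the same $\gamma$-based Young estimates for the lagged cross terms and the bound $\|d_1^n+d_2^n\|^2\leq(1+\gamma)\sum_i\|d_i^n\|^2$. Your final bookkeeping, yielding the surviving coefficient $\gamma/(2\Delta t)$, matches the paper's computation.
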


\begin{proof}
By substitute $v_{1}=u_{H,1}^{n+1}-u_{H,1}^{n}$ and $v_{2}=u_{H,2}^{n+1}-u_{H,2}^{n}$,
we have 
\begin{align*}
\cfrac{1}{\Delta t}\|u_{H,1}^{n+1}-u_{H,1}^{n}\|^{2}+\cfrac{1}{\Delta t}(u_{H,2}^{n}-u_{H,2}^{n-1},u_{H,1}^{n+1}-u_{H,1}^{n})\\
+(f(u_{H,1}^{n+1}+u_{H,2}^{n})+g(u_{H}^{n}),u_{H,1}^{n+1}-u_{H,1}^{n}) & =0,
\end{align*}
and 
\begin{align*}
\cfrac{1}{\Delta t}\|u_{H,2}^{n+1}-u_{H,2}^{n}\|^{2}+\cfrac{1}{\Delta t}(u_{H,1}^{n}-u_{H,1}^{n-1},u_{H,2}^{n+1}-u_{H,2}^{n})\\
+(f(u_{H,1}^{n+1}+u_{H,2}^{n})+g(u_{H}^{n}),u_{H,2}^{n+1}-u_{H,2}^{n}) & =0.
\end{align*}
Summing up the above two equations, we have 
\begin{align*}
\cfrac{1}{\Delta t}\sum_{i}\|u_{H,i}^{n+1}-u_{H,i}^{n}\|^{2}+\cfrac{1}{\Delta t}\sum_{i\neq j}(u_{H,i}^{n}-u_{H,i}^{n-1},u_{H,j}^{n+1}-u_{H,j}^{n})\\
+(f(u_{H,1}^{n+1}+u_{H,2}^{n})+g(u_{H}^{n}),u_{H}^{n+1}-u_{H}^{n}) & =0.
\end{align*}
We first use 
\begin{align*}
\cfrac{1}{\Delta t}|\sum_{i\neq j}(u_{H,i}^{n}-u_{H,i}^{n-1},u_{H,j}^{n+1}-u_{H,j}^{n})| & \leq\cfrac{\gamma}{\Delta t}\sum_{i\neq j}\|u_{H,i}^{n}-u_{H,i}^{n-1}\|\|u_{H,j}^{n+1}-u_{H,j}^{n}\|\\
 & \leq\cfrac{\gamma}{2\Delta t}\sum_{i}\Big(\|u_{H,i}^{n+1}-u_{H,i}^{n}\|^{2}+\|u_{H,i}^{n}-u_{H,i}^{n-1}\|^{2}\Big)
\end{align*}
and obtain 
\begin{align*}
 & \cfrac{1}{\Delta t}\sum_{i}\|u_{H,i}^{n+1}-u_{H,i}^{n}\|^{2}+\cfrac{1}{\Delta t}\sum_{i\neq j}(u_{H,i}^{n}-u_{H,i}^{n-1},u_{H,j}^{n+1}-u_{H,j}^{n})\\
\geq & \cfrac{2-\gamma}{2\Delta t}\sum_{i}\|u_{H,i}^{n+1}-u_{H,i}^{n}\|^{2}-\cfrac{\gamma}{2\Delta t}\sum_{i}\|u_{H,i}^{n}-u_{H,i}^{n-1}\|^{2}.
\end{align*}

To prove the stability of the method, we can consider 
\[
F(u_{H}^{n})=F(u_{H}^{n+1})-(f(u_{H}^{n+1}),u_{H}^{n+1}-u_{H}^{n})+\cfrac{1}{2}\delta^{2}F(\xi_{1}^{n})(u_{H}^{n+1}-u_{H}^{n},u_{H}^{n+1}-u_{H}^{n})
\]
\[
G(u_{H}^{n+1})=G(u_{H}^{n})+(g(u_{H}^{n}),u_{H}^{n+1}-u_{H}^{n})+\cfrac{1}{2}\delta^{2}G(\xi_{2}^{n})(u_{H}^{n+1}-u_{H}^{n},u_{H}^{n+1}-u_{H}^{n})
\]
for some $\xi_{i}^{n}=(1-\lambda_{i})u_{H}^{n+1}+\lambda_{i}u_{H}^{n}$
with $\lambda_{i}\in(0,1)$ and $i=1,2$.

Therefore, we have 
\begin{equation}
\begin{split}
 &(f(u_{H,1}^{n+1}+u_{H,2}^{n}),u_{H}^{n+1}-u_{H}^{n})\\
=  &(f(u_{H,1}^{n+1}+u_{H,2}^{n})-f(u_{H}^{n+1}),u_{H}^{n+1}-u_{H}^{n})\\
 &+F(u_{H}^{n+1})-F(u_{H}^{n})+
\cfrac{1}{2}\delta^{2}F(\xi_{1}^{n})(u_{H}^{n+1}-u_{H}^{n},u_{H}^{n+1}-u_{H}^{n})
\end{split}
\end{equation}
and 
\begin{align*}
 & (g(u_{H}^{n}),u_{H}^{n+1}-u_{H}^{n})\\
= & G(u_{H}^{n+1})-G(u_{H}^{n})-\cfrac{1}{2}\delta^{2}G(\xi_{2}^{n})(u_{H}^{n+1}-u_{H}^{n},u_{H}^{n+1}-u_{H}^{n}).
\end{align*}
Thus, we obtain 
\begin{align*}
 & \cfrac{\gamma}{2\Delta t}\sum_{i}\|u_{H,i}^{n+1}-u_{H,i}^{n}\|^{2}+\cfrac{(1-\gamma)}{\Delta t}\sum_{i}\|u_{H,i}^{n+1}-u_{H,i}^{n}\|^{2}+F(u_{H}^{n+1})+G(u_{H}^{n+1})+\cfrac{c(\xi^{n})}{2}\|u_{H}^{n+1}-u_{H}^{n}\|_{V}^{2}\\
\leq & \cfrac{\gamma}{2\Delta t}\sum_{i}\|u_{H,i}^{n}-u_{H,i}^{n-1}\|^{2}+F(u_{H}^{n})+G(u_{H}^{n})+\cfrac{B}{2}\|u_{H}^{n+1}-u_{H}^{n}\|^{2}\\
 & +(f(u_{H}^{n+1})-f(u_{H,1}^{n+1}+u_{H,2}^{n}),u_{H}^{n+1}-u_{H}^{n})
\end{align*}
and 
\[
\cfrac{B}{2}\|u_{H}^{n+1}-u_{H}^{n}\|^{2}\leq(1+\gamma)\cfrac{B}{2}\sum_{i} \|u_{H,i}^{n+1}-u_{H,i}^{n}\|^{2}.
\]

If 
\[
(f(u_{H}^{n+1})-f(u_{H,1}^{n+1}+u_{H,2}^{n}),u_{H}^{n+1}-u_{H}^{n})\leq\cfrac{c(\xi^{n})}{2}\|u_{H}^{n+1}-u_{H}^{n}\|_{V}^{2}+\Big(\cfrac{(1-\gamma)}{\Delta t}-(1+\gamma)\cfrac{B}{2}\Big)\sum_{i}\|u_{H,i}^{n+1}-u_{H,i}^{n}\|^{2}
\]
then we have 
\[
\cfrac{\gamma}{2\Delta t}\sum_{i}\|u_{H,i}^{n+1}-u_{H,i}^{n}\|^{2}+F(u_{H}^{n+1})+G(u_{H}^{n+1})\leq\cfrac{\gamma}{2\Delta t}\sum_{i}\|u_{H,i}^{n}-u_{H,i}^{n-1}\|^{2}+F(u_{H}^{n})+G(u_{H}^{n}).
\]
\end{proof}
\begin{lemma}
\label{lem:lem2}
If 
\begin{align}
\cfrac{\bar{C}_{2}^{2}}{2\bar{c}}\sup_{v_{2}\in V_{H,2}}\cfrac{\|v_{2}\|_{V}^{2}}{\|v_{2}\|^{2}}+(1+\gamma)\cfrac{B}{2}\leq\cfrac{(1-\gamma)}{\Delta t}, 
\label{eq:stab_cond}
\end{align}
where $\bar{c}=\inf_{u\in V_{H}}c(u)$, $\bar{C}_{2}=\sup_{\xi \in V_{H}}C_{2}(\xi)$
and 
\[
C_{2}(\xi)=\sup_{v\in V_{H},w\in V_{H,2}}\cfrac{1}{\|v\|_{V}\|w\|_{V}}\delta^{2}F(\xi)(w,v)\leq C(\xi),
\]
 we have 
\[
\cfrac{\gamma}{2\Delta t}\sum_{i}\|u_{H,i}^{n+1}-u_{H,i}^{n}\|^{2}+F(u_{H}^{n+1})+G(u_{H}^{n+1})\leq\cfrac{\gamma}{2\Delta t}\sum_{i}\|u_{H,i}^{n}-u_{H,i}^{n-1}\|^{2}+F(u_{H}^{n})+G(u_{H}^{n}).
\]
\end{lemma}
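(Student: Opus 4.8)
The plan is to show that the structural condition \eqref{eq:stab_cond} forces the hypothesis of Lemma \ref{lem:lem1} to hold, after which the energy inequality is immediate. The only quantity appearing in that hypothesis which is not yet under control is the cross term $(f(u_{H}^{n+1})-f(u_{H,1}^{n+1}+u_{H,2}^{n}),u_{H}^{n+1}-u_{H}^{n})$, so the entire argument reduces to estimating this single pairing and matching it against the right-hand side $\frac{\bar c}{2}\|u_{H}^{n+1}-u_{H}^{n}\|_{V}^{2}+\big(\frac{1-\gamma}{\Delta t}-(1+\gamma)\frac{B}{2}\big)\sum_{i}\|u_{H,i}^{n+1}-u_{H,i}^{n}\|^{2}$ required by Lemma \ref{lem:lem1}.

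The first and decisive step is to exploit that $f=\delta F/\delta u$. The two arguments of $f$ differ by exactly $u_{H}^{n+1}-(u_{H,1}^{n+1}+u_{H,2}^{n})=u_{H,2}^{n+1}-u_{H,2}^{n}$, which lies in the slow space $V_{H,2}$. Writing the difference through the fundamental theorem of calculus along the segment joining the two points gives $(f(u_{H}^{n+1})-f(u_{H,1}^{n+1}+u_{H,2}^{n}),v)=\int_{0}^{1}\delta^{2}F(\eta_s)(u_{H,2}^{n+1}-u_{H,2}^{n},v)\,ds$ for intermediate states $\eta_s\in V_H$, so that the first slot of $\delta^{2}F$ always carries an increment confined to $V_{H,2}$. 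This is precisely what permits the use of the sharper constant $C_{2}(\xi)\le C(\xi)$, defined with one argument restricted to $V_{H,2}$, in place of the full bound $C(\xi)$; it is the mechanism by which the slow space enters the final time-step restriction.

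Next I would bound the pairing by $\bar C_{2}\|u_{H,2}^{n+1}-u_{H,2}^{n}\|_{V}\,\|u_{H}^{n+1}-u_{H}^{n}\|_{V}$, convert the $V$-norm of the slow increment to its $L^{2}$ norm via $\|u_{H,2}^{n+1}-u_{H,2}^{n}\|_{V}^{2}\le \big(\sup_{v_{2}\in V_{H,2}}\|v_{2}\|_{V}^{2}/\|v_{2}\|^{2}\big)\|u_{H,2}^{n+1}-u_{H,2}^{n}\|^{2}$, and then apply Young's inequality with weight $\bar c$ so that exactly a $\frac{\bar c}{2}\|u_{H}^{n+1}-u_{H}^{n}\|_{V}^{2}$ term is produced. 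The residual $L^{2}$ term then carries the factor $\frac{\bar C_{2}^{2}}{2\bar c}\sup_{v_{2}}\|v_{2}\|_{V}^{2}/\|v_{2}\|^{2}$ times $\|u_{H,2}^{n+1}-u_{H,2}^{n}\|^{2}\le\sum_{i}\|u_{H,i}^{n+1}-u_{H,i}^{n}\|^{2}$. At this stage condition \eqref{eq:stab_cond}, rearranged as $\frac{\bar C_{2}^{2}}{2\bar c}\sup_{v_{2}}\|v_{2}\|_{V}^{2}/\|v_{2}\|^{2}\le \frac{1-\gamma}{\Delta t}-(1+\gamma)\frac{B}{2}$, upgrades this factor into precisely the coefficient demanded by the hypothesis of Lemma \ref{lem:lem1}, and invoking that lemma closes the argument.

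The main obstacle is the first step: rigorously recovering the difference $f(u_{H}^{n+1})-f(u_{H,1}^{n+1}+u_{H,2}^{n})$ as an evaluation of $\delta^{2}F$ with one argument confined to $V_{H,2}$, for a genuinely nonlinear $f$ rather than a linear one. Everything downstream—the choice of the constant $C_2$, the appearance of the inverse-type quantity $\sup_{v_2\in V_{H,2}}\|v_2\|_V^2/\|v_2\|^2$ restricted to the slow space, and hence the contrast-independence of the time-step bound when $V_{H,2}$ is well chosen—hinges on correctly identifying that the perturbation lives in $V_{H,2}$ and on applying the integral (mean value) form of the operator difference. The Young's inequality balancing and the substitution of \eqref{eq:stab_cond} are routine once this identification is in place.
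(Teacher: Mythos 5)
Your proposal is correct and follows essentially the same route as the paper: the paper likewise verifies the hypothesis of Lemma \ref{lem:lem1} by writing $f(u_{H}^{n+1})-f(u_{H,1}^{n+1}+u_{H,2}^{n})$ as an evaluation of $\delta^{2}F$ at an intermediate state with the increment $u_{H,2}^{n+1}-u_{H,2}^{n}\in V_{H,2}$ (it uses a single mean-value point $\tilde{\xi}^{n}$ where you use the integral form of the remainder, an immaterial difference since both are uniformly controlled by $\bar{C}_{2}$), then applies Young's inequality with weight $\bar{c}$ and the inverse-type bound $\sup_{v_{2}\in V_{H,2}}\|v_{2}\|_{V}^{2}/\|v_{2}\|^{2}$ to match condition \eqref{eq:stab_cond} against the coefficient required in Lemma \ref{lem:lem1}. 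No gaps.
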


\begin{proof}
For the proof of this lemma, we will show that if the condition of 
Lemma \ref{lem:lem2} holds, then the condition of Lemma \ref{lem:lem1} holds.
For this reason, we will need to estimate 
$(f(u_{H}^{n+1})-(f(u_{H,1}^{n+1}+u_{H,2}^{n}),u_{H}^{n+1}-u_{H}^{n})$.
Similar to the proof in previous lemma, we consider 
\[
(f(u_{H}^{n+1}),u_{H}^{n+1}-u_{H}^{n})=(f(u_{H,1}^{n+1}+u_{H,2}^{n}),u_{H}^{n+1}-u_{H}^{n})+(\delta^{2}F(\tilde{\xi}^{n})(u_{H}^{n+1}-u_{H}^{n}),u_{H,2}^{n+1}-u_{H,2}^{n})
\]
for some $\tilde{\xi}^{n}=(1-\tilde{\lambda})u_{H}^{n+1}+\tilde{\lambda}(u_{H,1}^{n+1}+u_{H,2}^{n})$
with $\tilde{\lambda}\in(0,1)$. We have 
\begin{align*}
(f(u_{H}^{n+1})-f(u_{H,1}^{n+1}+u_{H,2}^{n}),u_{H}^{n+1}-u_{H}^{n}) & =(\delta^{2}F(\tilde{\xi}^{n})(u_{H}^{n+1}-u_{H}^{n}),u_{H,2}^{n+1}-u_{H,2}^{n})\\
 & \leq C_{2}(\tilde{\xi}^{n})\|u_{H}^{n+1}-u_{H}^{n}\|_{V}\|u_{H,2}^{n+1}-u_{H,2}^{n}\|_{V}.
\end{align*}
Since 
\[
C_{2}(\tilde{\xi}^{n})\|u_{H}^{n+1}-u_{H}^{n}\|_{V}\|u_{H,2}^{n+1}-u_{H,2}^{n}\|_{V}\leq\cfrac{\bar{c}}{2}\|u_{H}^{n+1}-u_{H}^{n}\|_{V}^{2}+\cfrac{C_{2}^{2}(\tilde{\xi}^{n})}{2\bar{c}}\|u_{H,2}^{n+1}-u_{H,2}^{n}\|_{V}^{2},
\]
 we have 
\[
(f(u_{H}^{n+1})-f(u_{H,1}^{n+1}+u_{H,2}^{n}),u_{H}^{n+1}-u_{H}^{n})\leq\cfrac{\bar{c}}{2}\|u_{H}^{n+1}-u_{H}^{n}\|_{V}^{2}+\cfrac{C_{2}^{2}(\tilde{\xi}^{n})}{2\bar{c}}\|u_{H,2}^{n+1}-u_{H,2}^{n}\|_{V}^{2}.
\]
If 
\[
\cfrac{\bar{C}_{2}^{2}}{2\bar{c}}\cfrac{\|u_{H,2}^{n+1}-u_{H,2}^{n}\|_{V}^{2}}{\|u_{H,2}^{n+1}-u_{H,2}^{n}\|^{2}}+(1+\gamma)\cfrac{B}{2}\leq\cfrac{(1-\gamma)}{\Delta t},
\]
we have the condition formulated in Lemma \ref{lem:lem1}
\begin{equation}
\begin{split}
(f(u_{H}^{n+1})-f(u_{H,1}^{n+1}+u_{H,2}^{n}),u_{H}^{n+1}-u_{H}^{n})\leq\\
\cfrac{\bar{c}}{2}\|u_{H}^{n+1}-u_{H}^{n}\|_{V}^{2}+\Big(\cfrac{(1-\gamma)}{\Delta t}-(1+\gamma)\cfrac{B}{2}\Big)\sum_{i}\|u_{H,i}^{n+1}-u_{H,i}^{n}\|^{2}.
\end{split}
\end{equation}
By Lemma \ref{lem:lem1}, we get the result.
\end{proof}
Example 1. For $F=\cfrac{1}{2}\int_{\Omega}\kappa|\nabla u|^{2}$
and $G(u)=0$, we have
\[
(\cfrac{\delta F}{\delta u},v)=\int_{\Omega}\kappa\nabla u\cdot\nabla v,
\]
and 
\[
\delta^{2}F(u)(w,v)=\int_{\Omega}\kappa\nabla v\cdot\nabla w\;\forall u\in V,
\]
 and thus, we have 
\[
\bar{c}=C_{2}^{2}=1,\;B=0
\]
and the partially explicit scheme is stable when 
\[
\cfrac{\Delta t}{2}\sup_{v_{2}\in V_{H,2}}\cfrac{\|\kappa^{\frac{1}{2}}\nabla v_{2}\|^{2}}{\|v_{2}\|^{2}}\leq(1-\gamma)\;\forall v_{2}\in V_{H,2}.
\]

\section{Discussions}

\subsection{$G=0$ case}

First, we present some discussions for $G=0$ case. In this case, the first 
stability condition
for partial explicit scheme is
\begin{equation}
\begin{split}
(f(u_{H,1}^{n+1}+u_{H,2}^{n+1})-f(u_{H,1}^{n+1}+u_{H,2}^{n}),u_{H}^{n+1}-u_{H}^{n})\leq\\
\cfrac{\bar{c}}{2}\|u_{H}^{n+1}-u_{H}^{n}\|_{V}^{2}+
\cfrac{(1-\gamma)}{\Delta t}\sum_{i}\|u_{H,i}^{n+1}-u_{H,i}^{n}\|^{2}.
\end{split}
\end{equation}
This condition can be understood as a nonlinear 
constraint on the ``second space'' that 
represents $u_{H,2}^n$ and in order to have a small bound, 
one needs to guarantee that
$u_{H,1}^n$ captures important degrees of freeom. Indeed, the smallness
of
\[
{(f(u_{H,1}^{n+1}+u_{H,2}^{n+1})-f(u_{H,1}^{n+1}+u_{H,2}^{n}),u_{H}^{n+1}-u_{H}^{n})\over 
\|u_{H}^{n+1}-u_{H}^{n}\|_{V}^{2}}
\]
is a condition on $u_{H,1}^n$ (on the coarse space)
and requires that this term is chosen such that
the difference is independent of the contrast. 
This condition is more evident in Lemma \ref{lem:lem2}, where
the condition on $V_2$ is
\[
\cfrac{\bar{C}_{2}^{2}}{2\bar{c}}\sup_{v_{2}\in V_{H,2}}\cfrac{\|v_{2}\|_{V}^{2}}{\|v_{2}\|^{2}}\leq\cfrac{(1-\gamma)}{\Delta t}.
\]

\subsection{$G\not = 0 $ case.}
In this case, we will first treat the nonlinear forcing explicitly and we will also discuss the case when $g(u)$ is partially explicit.

\section{Numerical Results}
\label{sec:num}

In this section, we will present numerical results for various cases.
We will consider several choices for $f(u)$ and $g(u)$. For $f(u)$, we will
use diffusion operator for 
linear case
\[
 f(u) = - \nabla \cdot (\kappa \nabla u),
\]
and nonlinear case 
\begin{equation}
\label{eq:numres}
f(u) = - \nabla \cdot (\kappa \alpha(u) \nabla u). 
\end{equation}
In all examples, we will use two heterogeneous high contrast $\kappa(x)$
that represent the media, where one is more complex (more channels).
As for $g(u)$, we will consider several choices of nonlinear reaction terms
as discussed below. This term will contain nonlinear reaction and 
steady state spatial source term. One source term will be more regular
and the other more singular. The singular source term is chosen so
that CEM solution requires additional basis functions as the source term
contains subgrid features.
In all numerical examples, 
the coarse mesh size is $\frac{1}{10}$ and the fine 
mesh size is $\frac{1}{100}$. 
For the time discretization, we will consider the final time $T=0.05$.
In our numerical tests, we will compare three methods.
\begin{itemize}

\item First, we will use implicit CEM to compute the 
solution without additional degrees of freedom (called ``Implicit CEM'' 
in our graphs).

\item Secondly, we will compute the solution with additional degrees
of freedom using implicit CEM (called ``Implicit CEM with additional
basis'' in our graphs).

\item Finally, we will compute the solution with additional degrees of fredoom
using our proposed partially explicit approach (called ``Partially Explicit Splitting CEM'' in our graphs).

\end{itemize} 

In all examples, we use Newton or Picard iterations to 
find the solution of nonlinear equations.
In all examples, we observe that our proposed partially explicit
method provides similar accuracy as the implicit CEM approach that uses additional
degrees of freedom.

\subsection{$V_{H,1}$ and $V_{H,2}$ constructions}

In this section, we present a way to construct
the spaces satisfying (\ref{eq:stab_cond}) based on linear
problems. These spaces are constructed under the assumption
that linear multiscale structure can be used to accurately model
coarse-grid solution. It can be shown that the linear spaces
satisfy (\ref{eq:stab_cond}) under some assumptions on $\kappa(x,u)$
(see (\ref{eq:numres})).
Here, we follow our previous work \cite{chung_partial_expliict21}.
As the constrained energy minimization basis functions are 
constructed such that
they are almost orthogonal to a space $\tilde{V}$, 
the CEM finite element space is a good option for $V_{H,1}$.
To find a $V_{H,2}$ satisfying the condition (\ref{eq:stab_cond}),
we can use an eigenvalue problem to construct
the local basis functions.
We will first introduce the CEM finite element space,
followed by the discussion of constructing $V_{H,2}$.
In the following, we let $V(S) = H_0^1(S)$ for a proper 
subset $S\subset \Omega$.

\subsubsection{CEM method}
\label{sec:cem}

In this section, we introduce the CEM method for solving the problem
(\ref{eq:CEM_problem}). 
We will construct the finite element space by solving
a constrained energy minimization problem. Let $\mathcal{T}_{H}$
be a coarse grid partition of $\Omega$. For $K_{i}\in\mathcal{T}_{H}$,
we first need to define a set of auxiliary basis functions in $V(K_{i})$.
We solve 
\begin{align*} \int_{K_i} \kappa \nabla \psi_j^{(i)} \cdot \nabla v = \lambda_j^{(i)} s_i ( \psi_j^{(i)},v) \;
\forall v \in V(K_i), \end{align*}
where \begin{align*} s_i(u,v) = \int_{K_i} \tilde{\kappa} u v, \;
\tilde{\kappa} = \kappa H^{-2} \; \text{or} \; 
\tilde{\kappa} = \kappa \sum_{i}\left|\nabla \chi_{i}\right|^{2}   \end{align*}  
with $\{\chi_i\}$ being a partition of unity functions corresponding to an overlapping partition of the domain.
We then collect the first $L_i$ eigenfunctions corresponding to the first $L_i$ smallest eigenvalues. 
We define \[ V_{aux}^{(i)}:=\text{span}\{\psi_{j}^{(i)}:\;1\leq j\leq L_{i}\}.  \]
We next define a  projection operator $\Pi:L^{2}(\Omega)\mapsto V_{aux}\subset L^{2}(\Omega)$
\[
s(\Pi u,v)=s(u,v)\;\forall v\in V_{aux}:=\sum_{i=1}^{N_{e}}V_{aux}^{(i)},
\]
where $s(u,v):=\sum_{i=1}^{N_{e}}s_{i}(u|_{K_{i}},v|_{K_{i}})$ and $N_e$ is the number of coarse elements.
We let $K_{i}^{+}$ be an oversampling domain of $K_{i}$, which is a few coarse blocks larger than $K_i$ \cite{chung2018constraint}. 
For each auxiliary basis functions $\psi_{j}^{(i)}$, we can find
a local basis function $\phi_{j}^{(i)}\in V(K_{i}^{+})$
such that 
\begin{align*}
a(\phi_{j}^{(i)},v)+s(\mu_{j}^{(i)},v) & =0\;\forall v\in V(K_{i}^{+}),\\
s(\phi_{j}^{(i)},\nu) & =s(\psi_{j}^{(i)},\nu)\;\forall\nu\in V_{aux}(K_{i}^{+})
\end{align*}
for some $\mu_{j}^{(i)} \in V_{aux}$.
We then define the space $V_{cem}$ as 
\begin{align*}
V_{cem} & :=\text{span}\{\phi_{j}^{(i)}:\;1\leq i\leq N_{e},1\leq j\leq L_{i}\}.
\end{align*}
The CEM solution $u_{cem}$ is given by
\begin{align*}
(\cfrac{ \partial u_{cem} }{\partial t} ,v ) + ( f(u_{cem}) + g(u_{cem}) , v ) = 0 \; \forall v\in V_{cem}.
\end{align*}
Let $\tilde{V}:= \{v\in V: \; \Pi(v) = 0 \}$ and we can now construct $V_{H,2}$.

\subsubsection{Construction of $V_{H,2}$}





The construction of $V_{H,2}$ is based on the CEM type finite
element space. For
each coarse element $K_{i}$, we will solve an eigenvalue problem to get the second type of auxiliary
basis. We obtain eigenpairs $(\xi_{j}^{(i)},\gamma_{j}^{(i)})\in(V(K_{i})\cap\tilde{V})\times\mathbb{R}$ by solving
\begin{align}
\label{eq:spectralCEM2}
\int_{K_{i}}\kappa\nabla\xi_{j}^{(i)}\cdot\nabla v & =\gamma_{j}^{(i)}\int_{K_{i}}\xi_{j}^{(i)}v, \;\ \forall v\in V(K_{i})\cap\tilde{V}
\end{align}
and rearranging the eigenvalues by $\gamma_1^{(i)}\leq \gamma_2^{(i)}\leq \cdots $.
For each $K_i$, we choose the first few $J_i$ eigenfunctions corresponding to the smallest $J_i$ eigenvalues. We define $V_{aux,2} := \text{span}\{\xi_j^{(i)} : 1\leq i \leq N_e, 1\leq j\leq J_i \}$.
For each auxiliary basis function $\xi_j^{(i)} \in V_{aux,2}$, we define a basis function $\zeta_{j}^{(i)} \in V(K_i^+)$ such
that $\mu_{j}^{(i),1} \in V_{aux,1}$, $ \mu_{j}^{(i),2} \in V_{aux,2}$ and 
\begin{align}
a(\zeta_{j}^{(i)},v)+s(\mu_{j}^{(i),1},v)+ ( \mu_{j}^{(i),2},v) & =0, \;\forall v\in V(K_i^+), \label{eq:v2a} \\
s(\zeta_{j}^{(i)},\nu) & =0, \;\forall\nu\in V_{aux,1}, \label{eq:v2b} \\
(\zeta_{j}^{(i)},\nu) & =( \xi_{j}^{(i)},\nu), \;\forall\nu\in V_{aux,2}, \label{eq:v2c}
\end{align}
where we use the notation $V_{aux,1}$ to denote the space $V_{aux}$ defined in Section \ref{sec:cem}.
We define $$V_{H,2}=\text{span}\{\zeta_{j}^{(i)}| \; 1\leq i \leq N_e, \;  1 \leq j\leq J_i\}.$$

\subsection{Linear $f(u)$}

In this subsection, we discuss the numerical results for  
\[ f(u) = - \nabla \cdot (\kappa \nabla u). \]
Equation $(\ref{eq1})$ becomes
\begin{align}
u_t - \nabla \cdot (\kappa \nabla u) + g(u) = 0. \label{eq71}
\end{align}  
For the time discretization, we consider the time step $\Delta t = \frac{T}{500} = 10^{-4}$. 

Let $u_h$ be the fine mesh solution for Equation $(\ref{eq71})$. We use Newton's method to solve the following implicit equation. 
\[ (\frac{u^{n+1}_h-u^n_h}{\Delta t},v) +  a(u^{n+1}_h,v)
+ (g(u^{n+1}_h),v)=0 \quad \forall v \in V_h, \]
where $a(u^{n+1}_h,v)=\int_{\Omega} \kappa \nabla u^{n+1}_h \cdot \nabla v $ and $(\cdot,\cdot)$ is the $L^2$ inner product.
In finite element methods, let $\{\varphi_i\}_i$ be fine mesh basis functions.
Let $m$ be the step number in Newton's method.
We have $u^{n+1,m+1}_h = \sum\limits_i U^{n+1,m+1}_{h,i} \varphi_i $, $u^{n+1,m}_h = \sum\limits_i U^{n+1,m}_{h,i} \varphi_i $ and $u^n_h = \sum\limits_i U^n_{h,i} \varphi_i$. Le $M$ and $A$ be the mass and stiffness matrices, respectively. 
Let $U^{n+1,m+1}_h = (U^{n+1,m+1}_{h,i})$, $U^{n+1,m}_h = (U^{n+1,m}_{h,i})$ and $U^n_h = (U^n_{h,i})$. 
We define
\begin{align*}
P(U^{n+1,m}_h) = M U^{n+1,m}_h + \Delta t \cdot A U^{n+1,m}_h + \Delta t \cdot \mathcal{G}
- M U^n_h,
\end{align*}
where $\mathcal{G}=(\mathcal{G}_i)$
\[
\mathcal{G}_i=(g(u^{n+1,m}_h),\varphi_i). 
\]
Then \begin{align*}
(JP)(U^{n+1,m}_h) = M + \Delta t\cdot A + \Delta t \cdot (J\mathcal{G}),
\end{align*} 
where $J\mathcal{G} = ((J\mathcal{G})_{ij})$
\[ (J\mathcal{G})_{ij} =  \frac{\partial (g(u^{n+1,m}_h),\varphi_i)}{\partial U^{n+1,m}_{h,j}}. \]
Then we have \[ U^{n+1,m+1}_h = U^{n+1,m}_h - (JP)^{-1}(U^{n+1,m}_h) P(U^{n+1,m}_h).   \]
Newton's method for coarse mesh is similar.
The partially explicit scheme is:
\begin{align*}
(\frac{u^{n+1}_{H,1}-u^n_{H,1}}{\Delta t} + \frac{u^n_{H,2}-u^{n-1}_{H,2}}{\Delta t} ,v_1)
+ a((u^{n+1}_{H,1}+u^n_{H,2}),v_1 )
+ ( g(u^n_{H,1} + u^n_{H,2}) , v_1 ) =0 \quad \forall v_1 \in V_{H,1}, \\
( \frac{u^{n+1}_{H,2}-u^n_{H,2}}{\Delta t} + \frac{u^n_{H,1}-u^{n-1}_{H,1}}{\Delta t} , v_2 ) + a((u^{n+1}_{H,1} +u^n_{H,2})  , v_2 ) + ( g(u^n_{H,1} + u^n_{H,2}) , v_2 ) =0 \quad \forall v_2 \in V_{H,2}.
\end{align*} 

In our first example, we consider \[g(u) = -(10\cdot u \cdot (u^2 - 1 ) + g_0). \]
In Figure~\ref{NRfig1}, the permeability field $\kappa$ and 
$g_0$ are presented. 
As is shown, this permeability field has
heterogeneous high contrast channels 
and $g_0$ is a singular source term. 
In Figure~\ref{NRfig2}, we first present the reference solution  
which is implicitly solved using fine grid basis functions. 
The middle plot in Figure~\ref{NRfig2} is implicit CEM solution
obtained with additional basis functions and 
the solution in the right plot is obtained using the partially 
explicit scheme presented above. 
These three plots all show the solution at $t=T$. 
We present two relative error plots in Figure~\ref{NRfig3}.
The first one is the relative $L^2$ error plot and 
the second one is the relative energy error plot.
The blue, red and black curves (in both plots) stand for the relative 
error for implicit CEM solution, implicit CEM solution (with
additional basis) and partially explicit solution
respectively.
In each of these two plots, there is a noticeable improvement for error when we use additional basis functions.
We find that 
the black curve coincides with the red curve,
which means that the partially explicit scheme 
can achieve similar accuracy as the fully implicit scheme.
\begin{figure}[H]
\centering
\subfigure{
\includegraphics[width = 6cm]{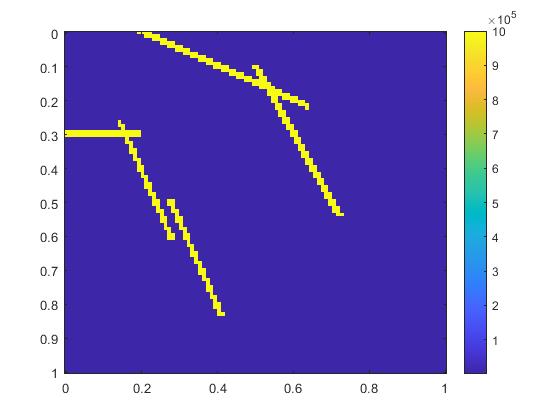}
}
\subfigure{
\includegraphics[width = 6cm]{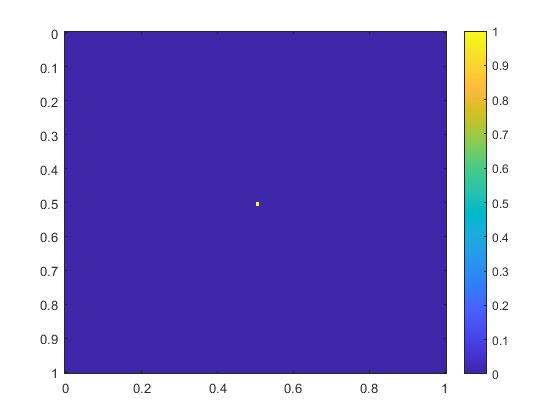}
}
\caption{Left: $\kappa$. Right: $g_0$.}
\label{NRfig1}
\end{figure}

\begin{figure}[H]
\centering
\subfigure{
\includegraphics[width = 5cm]{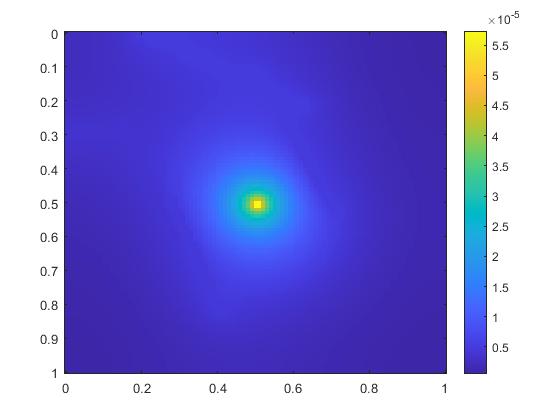}
}
\subfigure{
\includegraphics[width = 5cm]{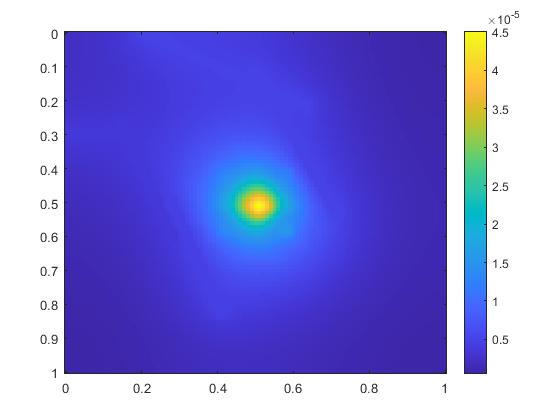}
}
\subfigure{
\includegraphics[width = 5cm]{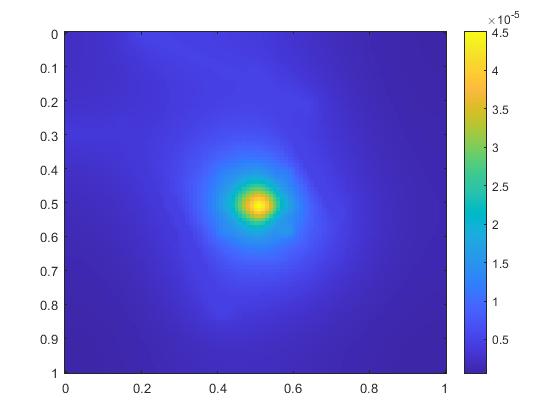}
}
\caption{Left: Reference solution at $t=T$. 
Middle: Implicit CEM solution (with additional basis) at $t=T$.
Right: Partially explicit solution at $t=T$.}
\label{NRfig2}
\end{figure}

\begin{figure}[H]
\centering
\subfigure{
\includegraphics[width = 6cm]{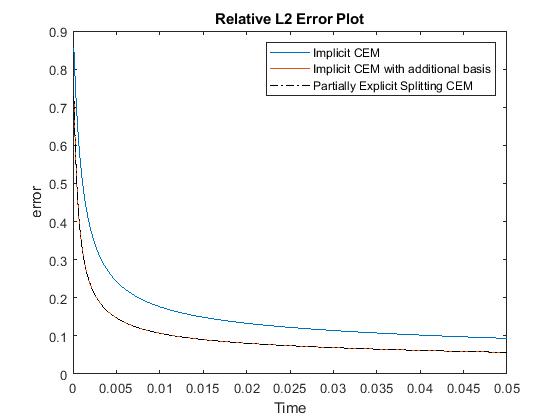}
}
\subfigure{
\includegraphics[width = 6cm]{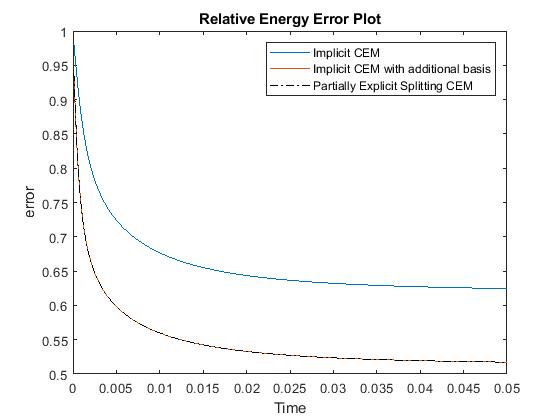}
}
\caption{Left: Relative $L^2$ error. 
Right: Relative energy error.}
\label{NRfig3}
\end{figure}

In this case, \[g(u) = -(10\cdot u \cdot (u^2 - 1 ) + g_0).\]
The difference is that we use a smooth source term.
In Figure~\ref{NRfig4}, the permeability field $\kappa$ and 
source term $g_0$ are shown.
The reference solution at the final time, implicit CEM solution (with additional 
basis) at the final time and partially explicit solution
at the final time 
are presented in Figure~\ref{NRfig5}. 
We show the relative $L^2$ error plot and 
the relative energy error plot in Figure~\ref{NRfig6}.
We see that the relative $L^2$ and energy error curves for 
implicit CEM (with additional basis) and
partially explicit scheme almost coincide,
which implies similar accuracy between them.
\begin{figure}[H]
\centering
\subfigure{
\includegraphics[width = 6cm]{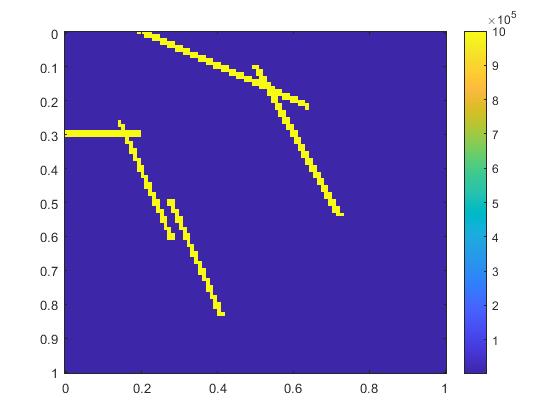}
}
\subfigure{
\includegraphics[width = 6cm]{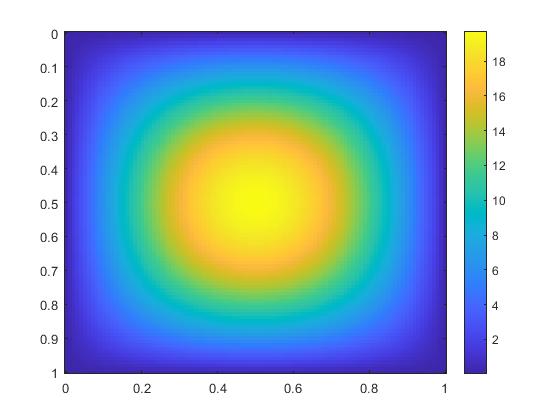}
}
\caption{Left: $\kappa$. Right: $g_0$.}
\label{NRfig4}
\end{figure}

\begin{figure}[H]
\centering
\subfigure{
\includegraphics[width = 5cm]{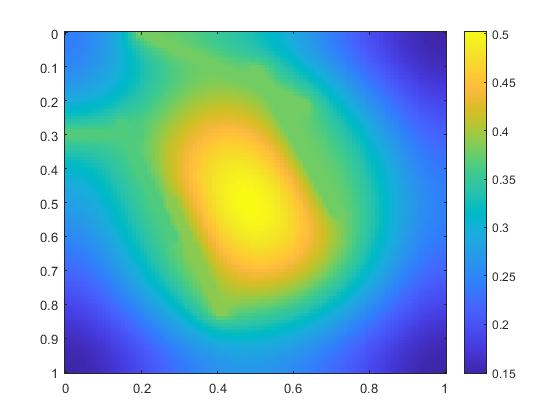}
}
\subfigure{
\includegraphics[width = 5cm]{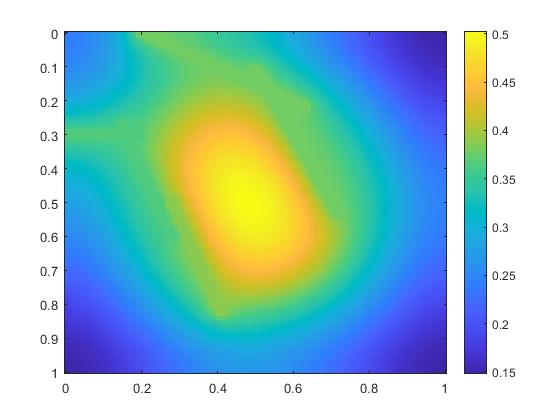}
}
\subfigure{
\includegraphics[width = 5cm]{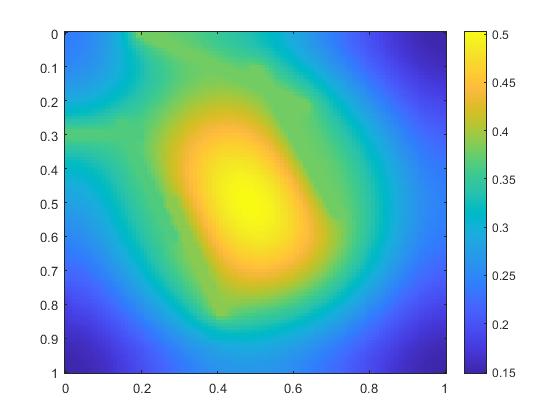}
}
\caption{Left: Reference solution at $t=T$. 
Middle: Implicit CEM solution (with additional basis) at $t=T$.
Right: Partially explicit solution at $t=T$.}
\label{NRfig5}
\end{figure}

\begin{figure}[H]
\centering
\subfigure{
\includegraphics[width = 6cm]{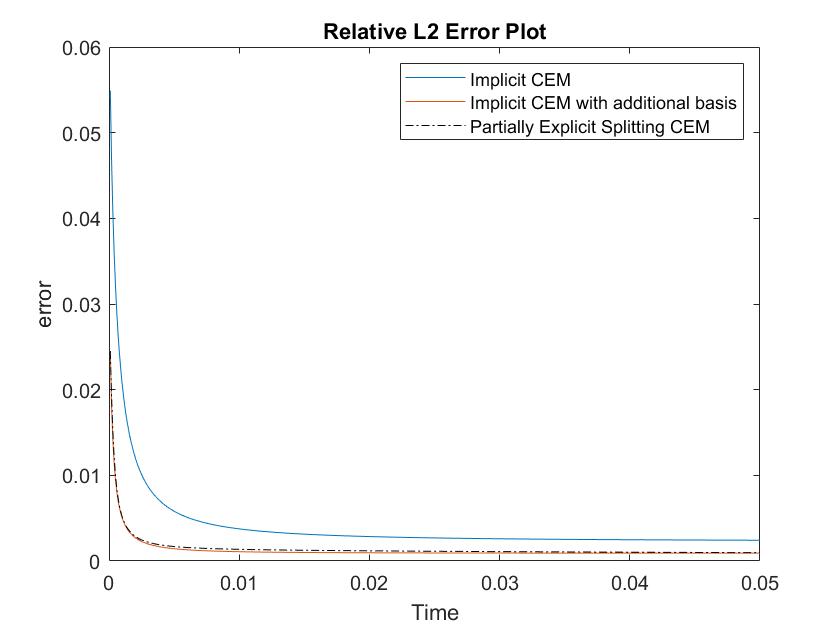}
}
\subfigure{
\includegraphics[width = 6cm]{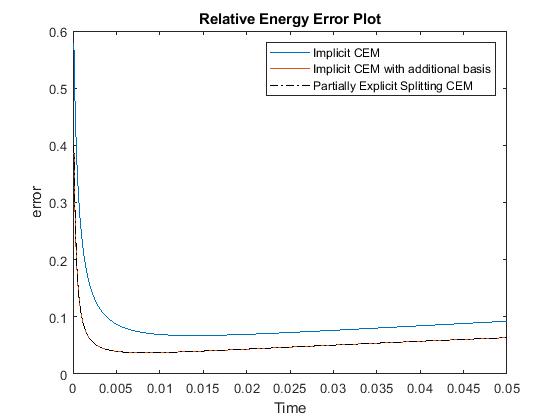}
}
\caption{Left: Relative $L^2$ error. 
Right: Relative energy error.}
\label{NRfig6}
\end{figure}

In our third test, \[g(u) = -(10\cdot u \cdot (u^2 - 1 ) + g_0),\]
and we use a more complicated 
permeability field with more high contrast channels.
Figure~\ref{NRfig7} shows the permeability field $\kappa$ 
and source term $g_0$. 
The reference solution at $t=T$, implicit CEM solution (with additional
basis) at $t=T$ and partially explicit solution at $t=T$ are presented in Figure~\ref{NRfig8}.
In Figure~\ref{NRfig9}, we show the relative $L^2$ error plot and the
relative energy error plot.
In this case, in both error plots, the curves for 
implicit CEM (with additional basis) and partially explicit
scheme coincide.
\begin{figure}[H]
\centering
\subfigure{
\includegraphics[width = 6cm]{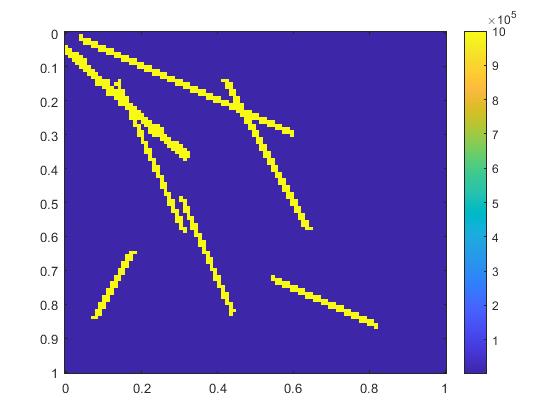}
}
\subfigure{
\includegraphics[width = 6cm]{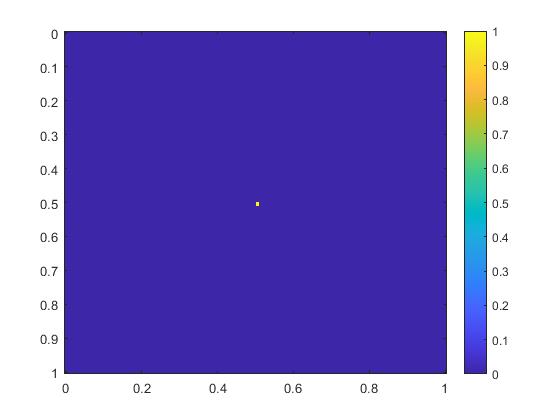}
}
\caption{Left: $\kappa$. Right: $g_0$.}
\label{NRfig7}
\end{figure}

\begin{figure}[H]
\centering
\subfigure{
\includegraphics[width = 5cm]{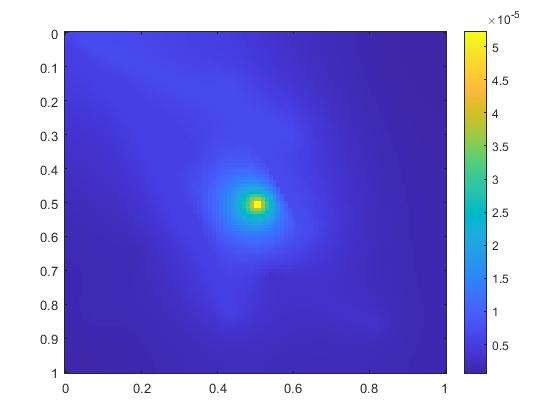}
}
\subfigure{
\includegraphics[width = 5cm]{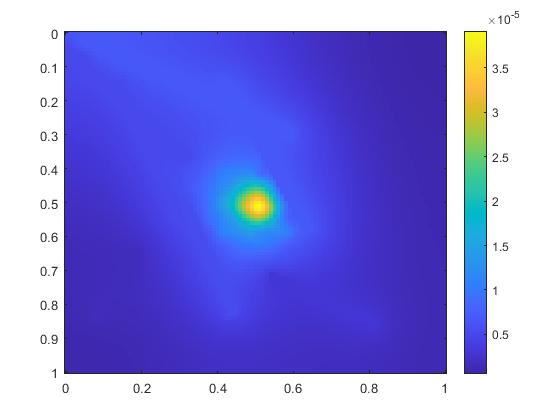}
}
\subfigure{
\includegraphics[width = 5cm]{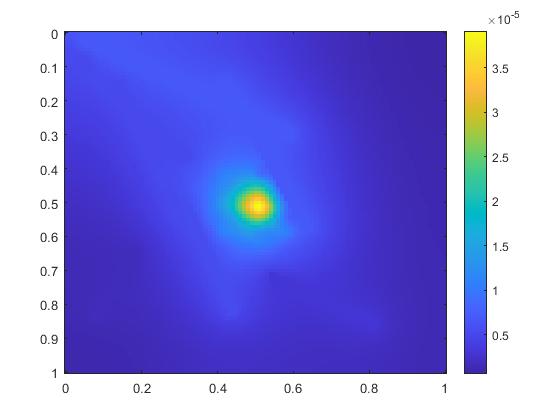}
}
\caption{Left: Reference solution at $t=T$. 
Middle: Implicit CEM solution (with additional basis) at $t=T$.
Right: Partially explicit solution at $t=T$.}
\label{NRfig8}
\end{figure}

\begin{figure}[H]
\centering
\subfigure{
\includegraphics[width = 6cm]{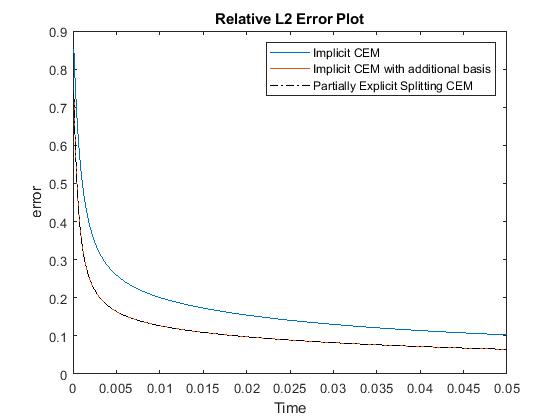}
}
\subfigure{
\includegraphics[width = 6cm]{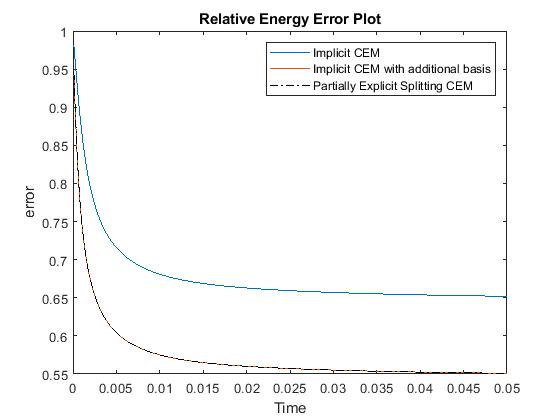}
}
\caption{Left: Relative $L^2$ error. 
Right: Relative energy error.}
\label{NRfig9}
\end{figure}

In this example, \[g(u) = -(10\cdot u \cdot (u^2 - 1 ) + g_0).\]
We use the more complicated permeability field 
and the smooth source term which are shown in Figure~\ref{NRfig10}.
In Figure~\ref{NRfig11}, the reference solution at the final time, 
implicit CEM solution (with additional basis) at the final time and 
partially explicit solution at the final time are presented.
Relative $L^2$ error and energy error plots 
are shown in Figure~\ref{NRfig12}.
In this case, the relative error for implicit CEM scheme is small 
and comparable to the two schemes with additional basis.
From Figure~\ref{NRfig12}, we can find that the $L^2$ and energy
error for implicit CEM (with additional basis) and
partially explicit scheme are nearly the same.
\begin{figure}[H]
\centering
\subfigure{
\includegraphics[width = 6cm]{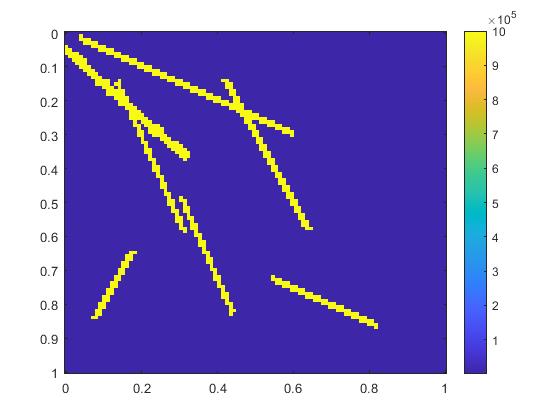}
}
\subfigure{
\includegraphics[width = 6cm]{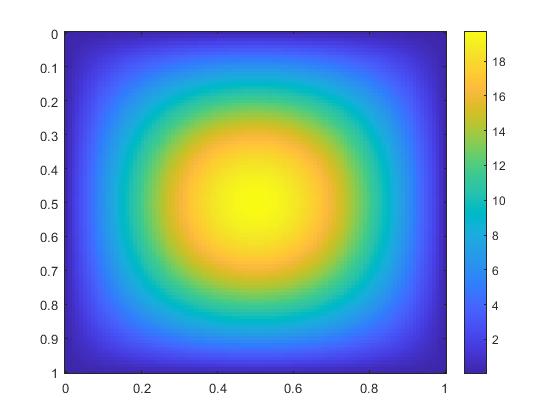}
}
\caption{Left: $\kappa$. Right: $g_0$.}
\label{NRfig10}
\end{figure}

\begin{figure}[H]
\centering
\subfigure{
\includegraphics[width = 5cm]{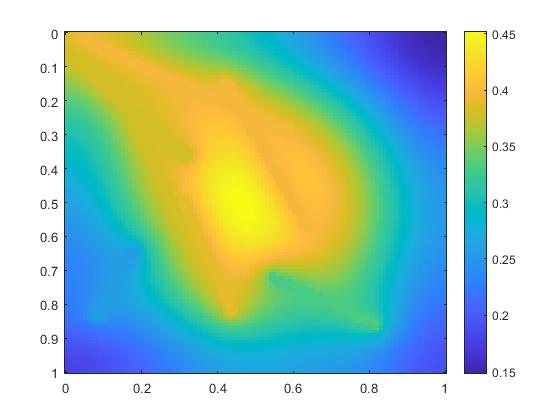}
}
\subfigure{
\includegraphics[width = 5cm]{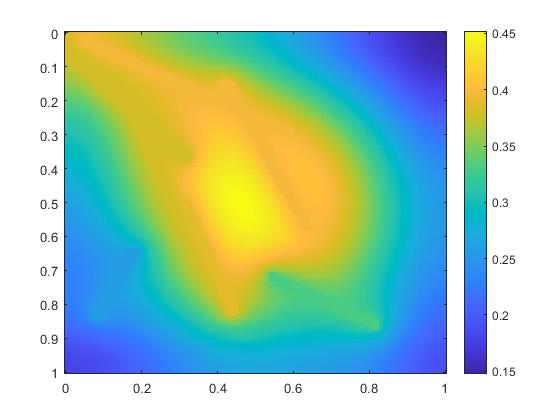}
}
\subfigure{
\includegraphics[width = 5cm]{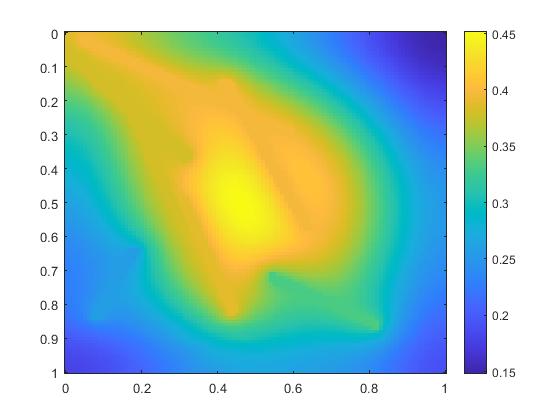}
}
\caption{Left: Reference solution at $t=T$. 
Middle: Implicit CEM solution (with additional basis) at $t=T$.
Right: Partially explicit solution at $t=T$.}
\label{NRfig11}
\end{figure}

\begin{figure}[H]
\centering
\subfigure{
\includegraphics[width = 6cm]{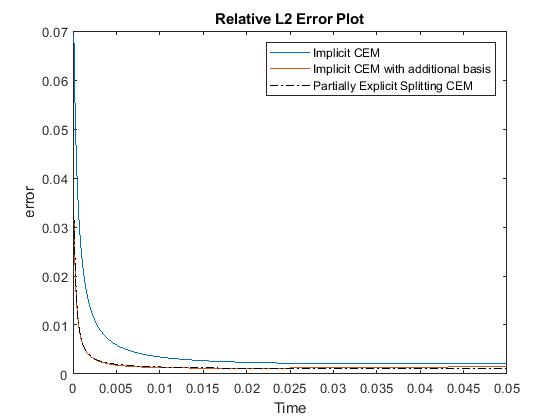}
}
\subfigure{
\includegraphics[width = 6cm]{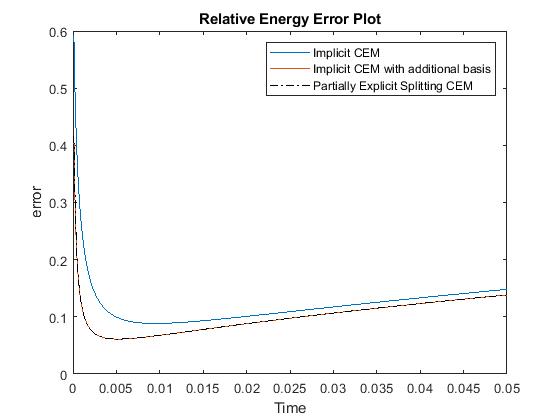}
}
\caption{Left: Relative $L^2$ error. 
Right: Relative energy error.}
\label{NRfig12}
\end{figure}

In this case, we use a new reaction term \[g = -(1+\cos(a_1 \cdot u) + g_0),\; a_1(x,y) = 2\cos(20\pi x)\cos(20\pi y).\]
In numerical experiments, we set $a_1$ to be constant inside every fine element.
Figure~\ref{NRfig13} shows the permeability field $\kappa$,
the source term $g_0$ and the function $a_1$.
The reference solution, implicit CEM solution (with additional basis) and partially explicit 
solution at the final time are shown in Figure~\ref{NRfig14}.
The relative $L^2$ error plot 
and relative energy error plot are presented in Figure~\ref{NRfig15}.
From the relative $L^2$ error plot, we find that the relative $L^2$ error for three schemes 
are comparable.
For the relative energy error, we can find a large improvement when we introduce 
additional basis.
The relative energy error for implicit CEM solution (with additional basis) 
and partially explicit solution are nearly the same.
\begin{figure}[H]
\centering
\subfigure{
\includegraphics[width = 5cm]{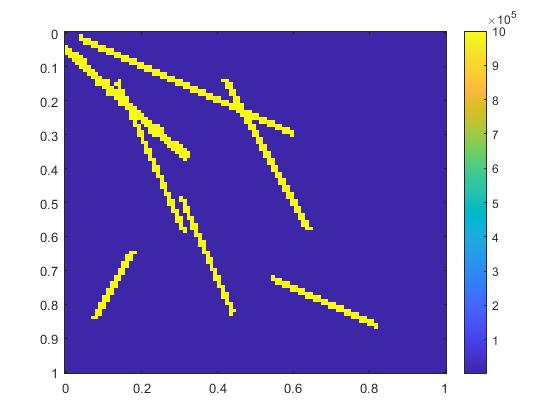}
}
\subfigure{
\includegraphics[width = 5cm]{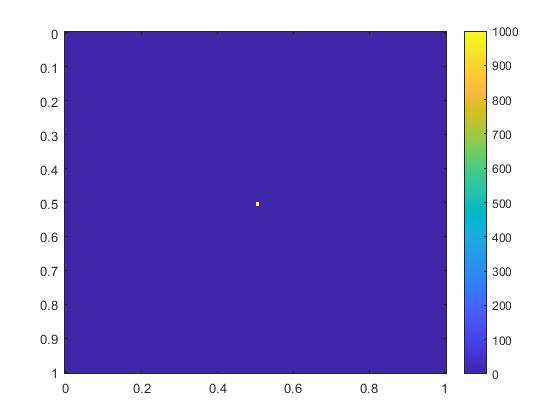}
}
\subfigure{
\includegraphics[width = 5cm]{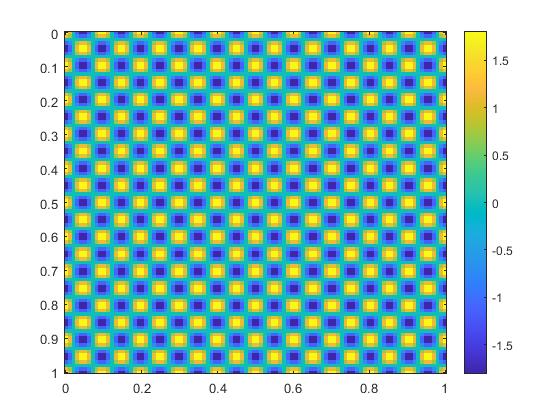}
}
\caption{Left: $\kappa$. Middle: $g_0$. Right: $a_1$.}
\label{NRfig13}
\end{figure}

\begin{figure}[H]
\centering
\subfigure{
\includegraphics[width = 5cm]{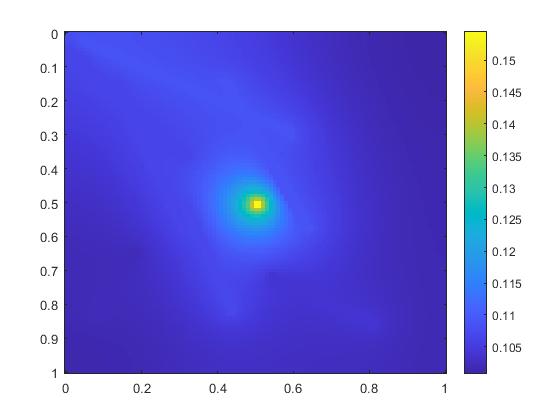}
}
\subfigure{
\includegraphics[width = 5cm]{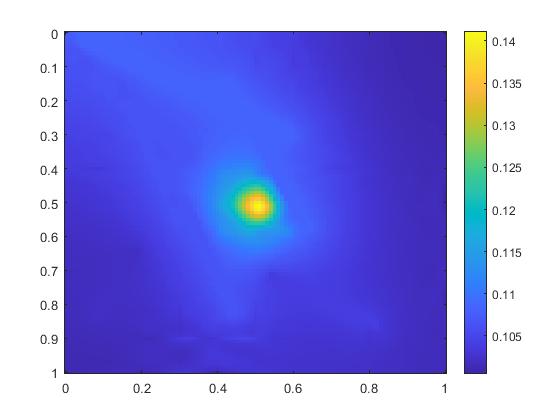}
}
\subfigure{
\includegraphics[width = 5cm]{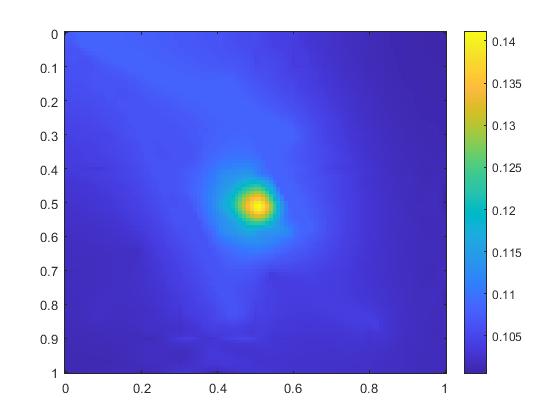}
}
\caption{Left: Reference solution at $t=T$. 
Middle: Implicit CEM solution (with additional basis) at $t=T$.
Right: Partially explicit solution at $t=T$.}
\label{NRfig14}
\end{figure}

\begin{figure}[H]
\centering
\subfigure{
\includegraphics[width = 6cm]{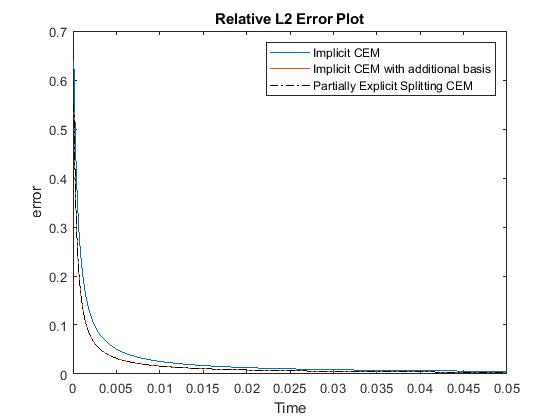}
}
\subfigure{
\includegraphics[width = 6cm]{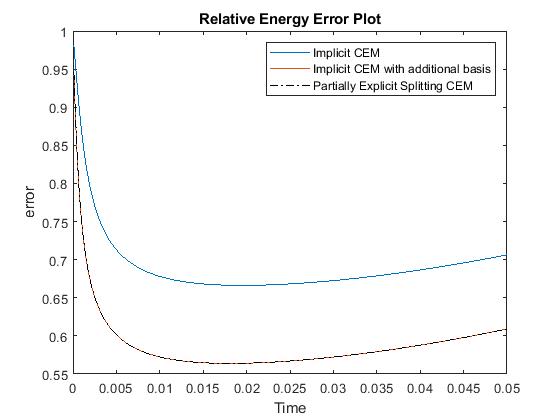}
}
\caption{Left: Relative $L^2$ error. 
Right: Relative energy error.}
\label{NRfig15}
\end{figure}
 
In this case, \[g = -(1+\cos(a_1 \cdot u) + g_0),\; a_1(x,y) = 2\cos(20\pi x)\cos(20\pi y).\]
The permeability field $\kappa$, the source term $g_0$ and the funciotn $a_1$ are presented in Figure~\ref{NRfig16}.
The reference solution, 
implicit CEM solution (with additional basis) and partially explicit solution at the final time are shown in Figure~\ref{NRfig17}. 
We present the relative $L^2$ error plot 
and the relative energy error plot in Figure~\ref{NRfig18}. 
We observe that the $L^2$ and energy error curves for implicit CEM (with additional basis) and partially explicit scheme coincide.  
\begin{figure}[H]
\centering
\subfigure{
\includegraphics[width = 5cm]{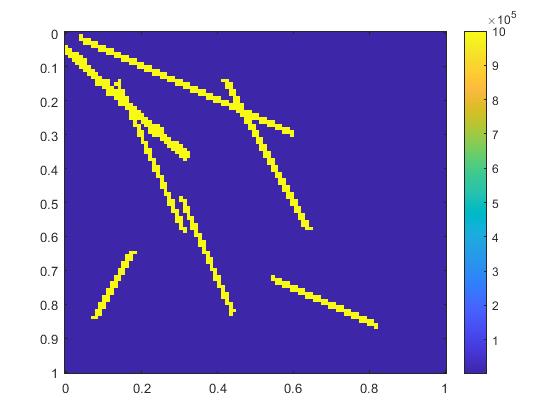}
}
\subfigure{
\includegraphics[width = 5cm]{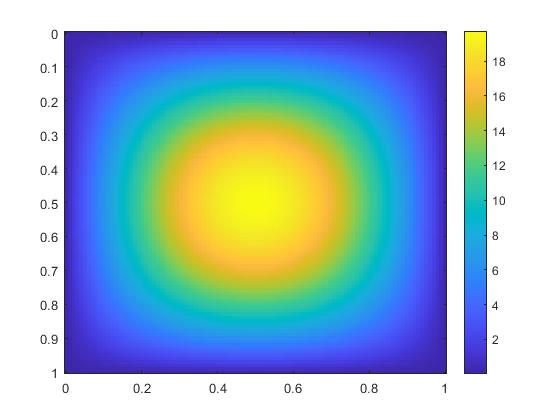}
}
\subfigure{
\includegraphics[width = 5cm]{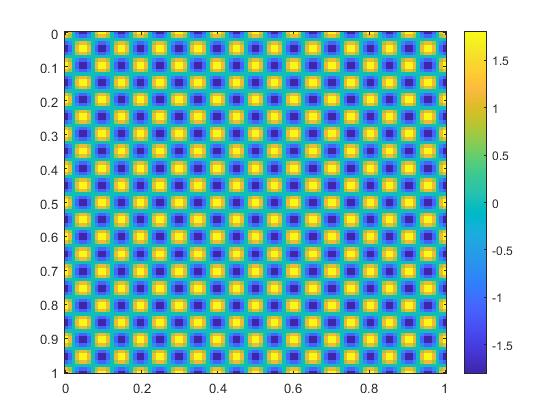}
}
\caption{Left: $\kappa$. Middle: $g_0$. Right: $a_1$.}
\label{NRfig16}
\end{figure}

\begin{figure}[H]
\centering
\subfigure{
\includegraphics[width = 5cm]{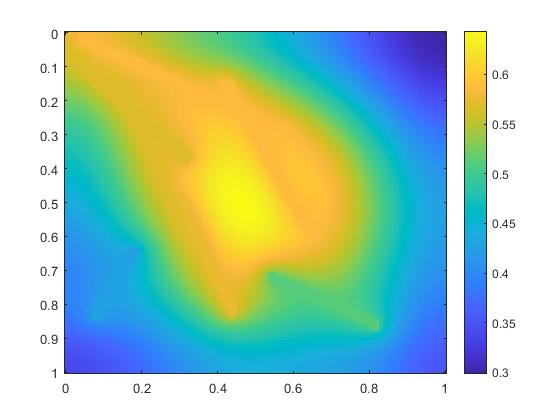}
}
\subfigure{
\includegraphics[width = 5cm]{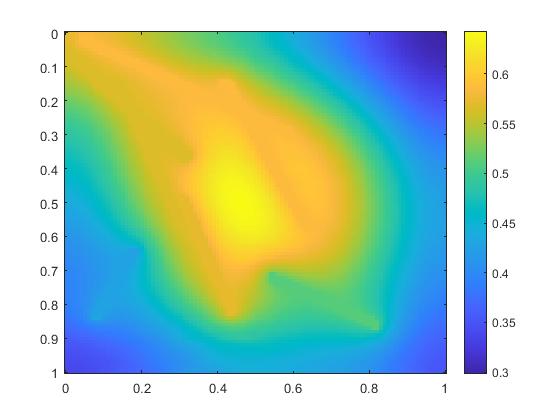}
}
\subfigure{
\includegraphics[width = 5cm]{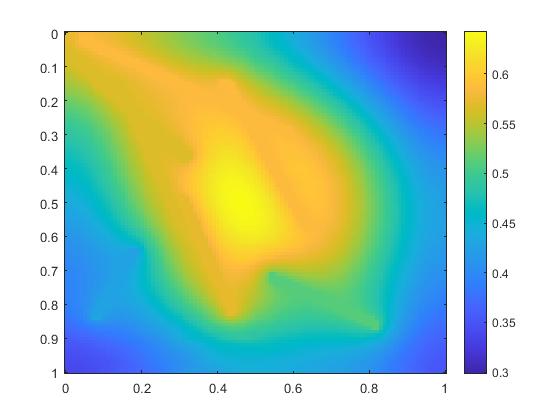}
}
\caption{Left: Reference solution at $t=T$. 
Middle: Implicit CEM solution (with additional basis) at $t=T$.
Right: Partially explicit solution at $t=T$.}
\label{NRfig17}
\end{figure}

\begin{figure}[H]
\centering
\subfigure{
\includegraphics[width = 6cm]{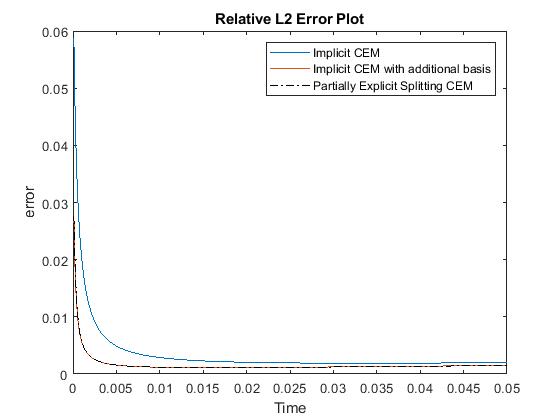}
}
\subfigure{
\includegraphics[width = 6cm]{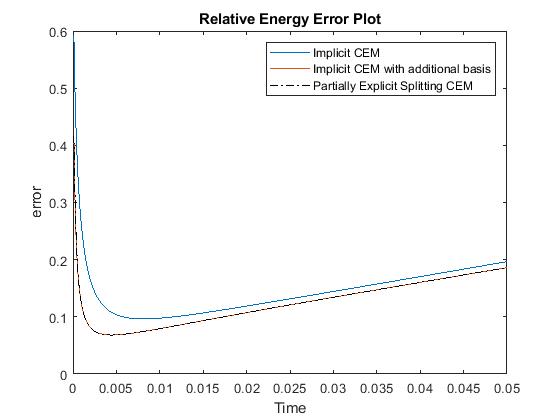}
}
\caption{Left: Relative $L^2$ error. 
Right: Relative energy error.}
\label{NRfig18}
\end{figure}

\subsection{Nonlinear $f(u)$}
We want to explore the case in which the diffusion operator is nonlinear. So in Equation $(\ref{eq1})$, we set 
\[ f(u) = - \nabla \cdot (\kappa \alpha(u)\nabla u).\]
Equation $(\ref{eq1})$ becomes 
\begin{align}
 u_t - \nabla \cdot ( \kappa \alpha(u) \nabla u ) + g(u) = 0. \label{eq72}
\end{align}
Let $u_h$ be the fine mesh solution for Equation $(\ref{eq72})$. We use the Picard-Newton iteration to solve the implicit equation.
\[ \Big( \frac{u^{n+1,m+1}_h-u^n_h}{\Delta t},v \Big) + \int_{\Omega}\kappa \alpha(u^{n+1,m}_h)
\nabla u^{n+1,m+1}_h \cdot \nabla v + (g(u^{n+1,m+1}_h),v) = 0 \quad \forall v\in V_h,  \]
where $m$ is the Picard-Newton step number and $(\cdot,\cdot)$ is the $L^2$ inner product.
In finite element methods, let $\{\varphi_i\}_i$ be fine mesh basis functions.
We have $u^{n+1,m+1}_h= \sum\limits_i U^{n+1,m+1}_{h,i} \varphi_i$, $u^{n+1,m}_h= \sum\limits_i U^{n+1,m}_{h,i} \varphi_i$ and $u^n_h = \sum\limits_i U^n_{h,i} \varphi_i$.
Let $M$ be the mass matrix. Let $U^{n+1,m+1}_h=(U^{n+1,m+1}_{h,i})$, $U^{n+1,m}_h = (U^{n+1,m}_{h,i})$ and $U^n_h = (U^n_{h,i})$.
We define \begin{align*}
Q(U^{n+1,m}_h) = MU^{n+1,m}_h + \Delta t\cdot \mathcal{A} U^{n+1,m}_h - MU^n_h + \Delta t \cdot \mathcal{G},
\end{align*}
where $\mathcal{A} = (\mathcal{A}_{ij})$
\[ \mathcal{A}_{ij} = \int_{\Omega} \kappa \alpha(u^{n+1,m}_h) \nabla \varphi_j \nabla \varphi_i \]
and $\mathcal{G}=(\mathcal{G}_i)$
\[ \mathcal{G}_i = (g(u^{n+1,m}_h) , \varphi_i).\]
Then \[(JQ)(U^{n+1,m}_h) = M +  \Delta t\mathcal{A} + \Delta t \cdot (J\mathcal{G}), \]
where $J\mathcal{G} = ((J\mathcal{G})_{ij})$
\[(J\mathcal{G})_{ij}= \frac{\partial (g(u^{n+1,m}_h),\varphi_i)}{\partial U^{n+1,m}_{h,j}}.\]
Thus, \[U^{n+1,m+1}_h = U^{n+1,m}_h-(JQ)^{-1}(U^{n+1,m}_h) Q(U^{n+1,m}_h).  \]
Newton's method for coarse mesh is similar.
For the partially explicit scheme, we use the following Picard-Newton iteration which can be solved similarly using the method introduced above.
Note that we change the reaction term to be partially explicit.
\begin{align*}
(\frac{u^{n+1,m+1}_{H,1}-u^n_{H,1}}{\Delta t} + \frac{u^n_{H,2}-u^{n-1}_{H,2}}{\Delta t} ,v_1)
+ \int_{\Omega}\kappa \alpha(u^{n+1,m}_{H,1} +u^n_{H,2}) \nabla (u^{n+1,m+1}_{H,1}+u^n_{H,2} ) \cdot \nabla v_1 \\
= ( -g(u^{n+1,m+1}_{H,1} + u^n_{H,2}) , v_1 ) \quad \forall v_1 \in V_{H,1}, \\
( \frac{u^{n+1}_{H,2}-u^n_{H,2}}{\Delta t} + \frac{u^n_{H,1}-u^{n-1}_{H,1}}{\Delta t} , v_2 ) + \int_{\Omega} \kappa \alpha(u^{n+1}_{H,1}+u^n_{H,2}) 
\nabla(u^{n+1}_{H,1} +u^n_{H,2})  \cdot \nabla v_2  \\
=(-g(u^{n+1}_{H,1} + u^n_{H,2}) , v_2 ) \quad \forall v_2 \in V_{H,2}.
\end{align*}
In the following three examples, we use $g(u_{H,1}^{n+1} + u_{H,2}^n) $ in the partially explicit algorithm. One can prove its stability as in Lemma~\ref{lem:lem1} and Lemma~\ref{lem:lem2}. The proof is similar and we omit it here.\\
In the following three cases, \[g(u) = -(10u(u^2-1) + g_0).\]
In the first example, we consider $\alpha(u) = 1+u^2$ and time step $\Delta t = \frac{T}{500}=10^{-4}$. The permeability field $\kappa$ and the source term $g_0$ are presented in Figure~\ref{NRfig19}.
The reference solution, implicit CEM solution (with additional basis) and partially explicit solution at the final time are shown in Figure~\ref{NRfig20}.
We present the relative $L^2$ error plot and relative energy error plot in Figure~\ref{NRfig21}.
We can find that the curves for implicit CEM solution (with additional basis) and partially explicit solution coincide, which implies that our new partially explicit scheme is also effective and has similar accuracy as the implicit CEM (with additional basis) scheme. 
\begin{figure}[H]
\centering
\subfigure{
\includegraphics[width = 6cm]{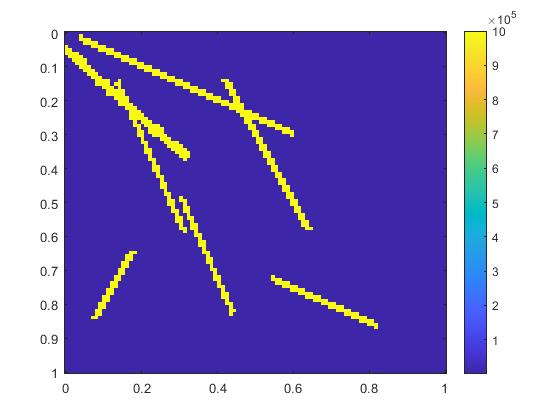}
}
\subfigure{
\includegraphics[width = 6cm]{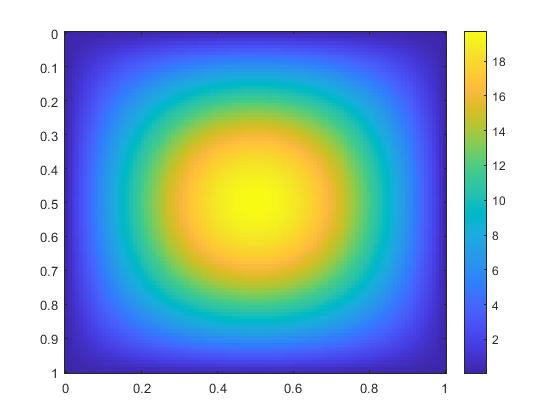}
}
\caption{Left: $\kappa$. Right: $g_0$.}
\label{NRfig19}
\end{figure}

\begin{figure}[H]
\centering
\subfigure{
\includegraphics[width = 5cm]{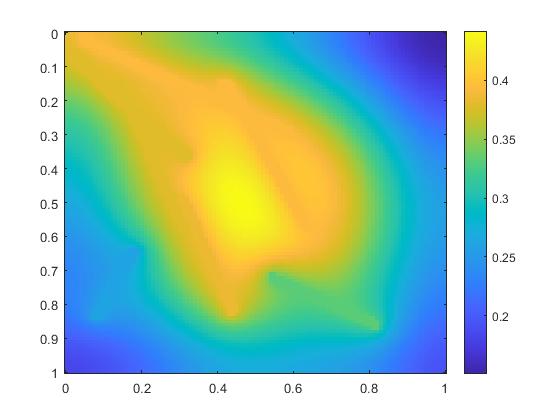}
}
\subfigure{
\includegraphics[width = 5cm]{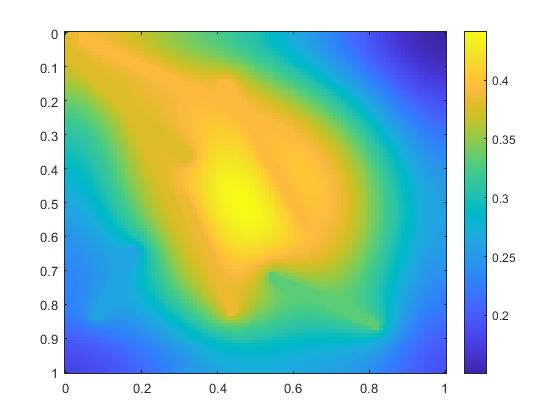}
}
\subfigure{
\includegraphics[width = 5cm]{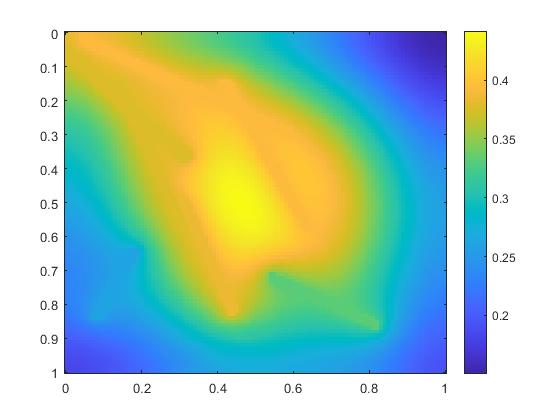}
}
\caption{Left: Reference solution at $t=T$. 
Middle: Implicit CEM solution (with additional basis) at $t=T$.
Right: Partially explicit solution at $t=T$.}
\label{NRfig20}
\end{figure}

\begin{figure}[H]
\centering
\subfigure{
\includegraphics[width = 6cm]{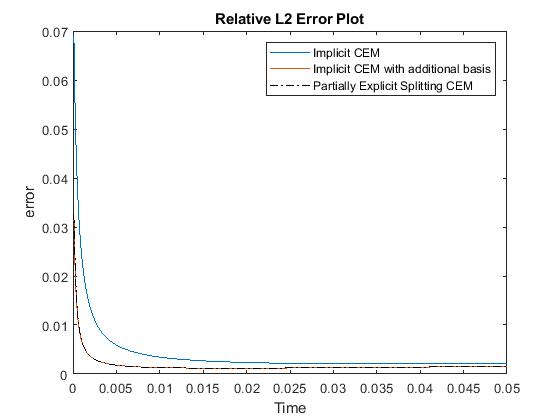}
}
\subfigure{
\includegraphics[width = 6cm]{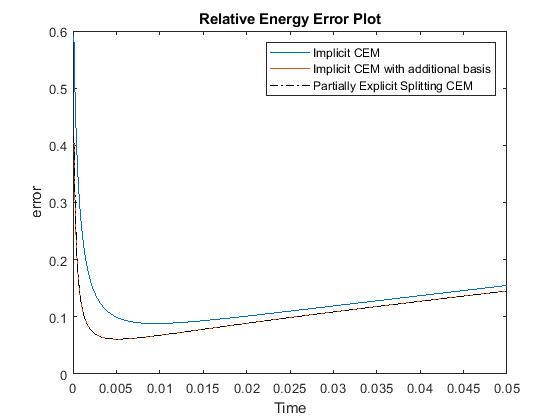}
}
\caption{Left: Relative $L^2$ error. 
Right: Relative energy error.}
\label{NRfig21}
\end{figure}

In this second example, we consider $\alpha(u) = 2 + \cos(u) $ and time step $\Delta t = \frac{0.05}{1500}$.
Figure~\ref{NRfig22} shows the permeability field $\kappa$ and the source term $g_0$. 
The reference solution, implicit CEM solution (with additional basis) and partially explicit solution at $t=T$ are presented in Figure~\ref{NRfig23}.
The relative $L^2$ error plot and relative energy error plot are shown in Figure~\ref{NRfig24}. 
The partially explicit scheme also works in this case and has similar accuracy as the implicit CEM (with additional basis) scheme.
\begin{figure}[H]
\centering
\subfigure{
\includegraphics[width = 6cm]{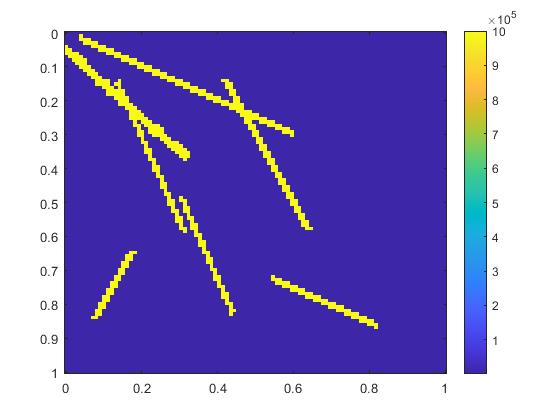}
}
\subfigure{
\includegraphics[width = 6cm]{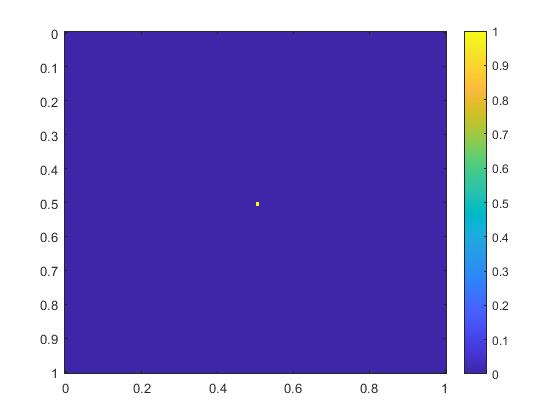}
}
\caption{Left: $\kappa$. Right: $g_0$.}
\label{NRfig22}
\end{figure}

\begin{figure}[H]
\centering
\subfigure{
\includegraphics[width = 5cm]{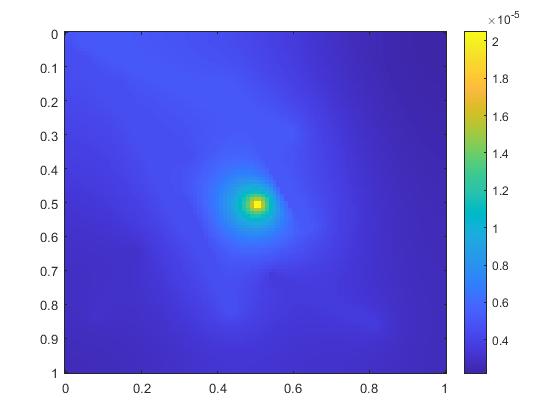}
}
\subfigure{
\includegraphics[width = 5cm]{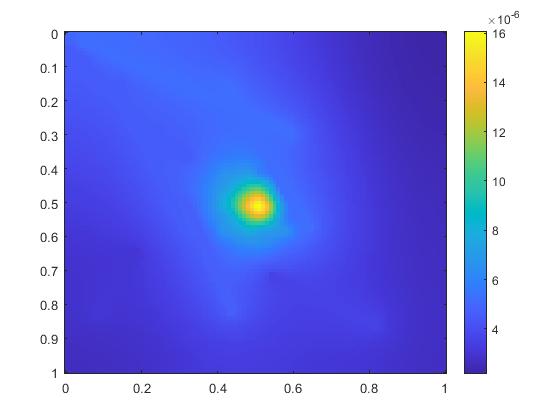}
}
\subfigure{
\includegraphics[width = 5cm]{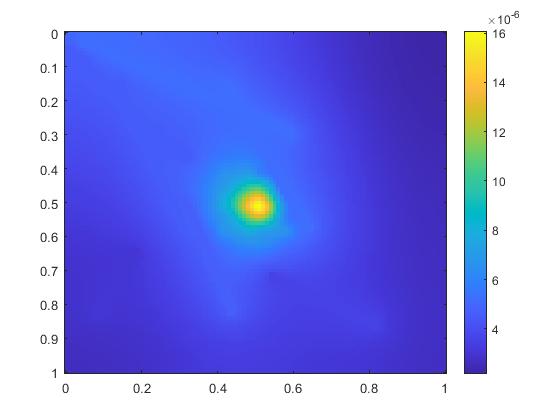}
}
\caption{Left: Reference solution at $t=T$. 
Middle: Implicit CEM solution (with additional basis) at $t=T$.
Right: Partially explicit solution at $t=T$.}
\label{NRfig23}
\end{figure}

\begin{figure}[H]
\centering
\subfigure{
\includegraphics[width = 6cm]{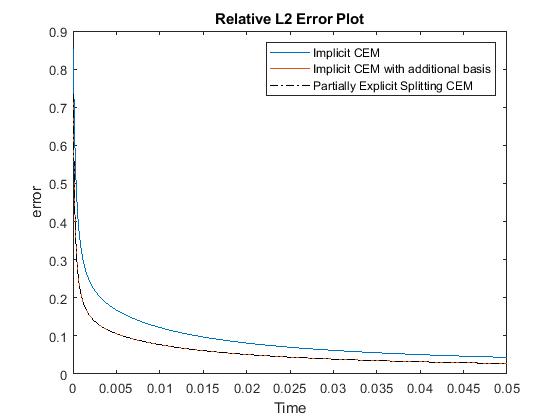}
}
\subfigure{
\includegraphics[width = 6cm]{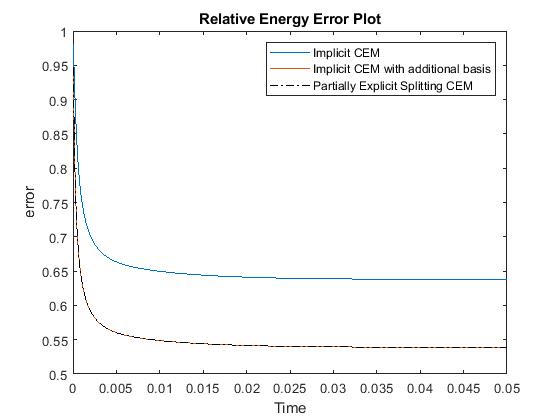}
}
\caption{Left: Relative $L^2$ error. 
Right: Relative energy error.}
\label{NRfig24}
\end{figure}

In this case, we have $\alpha(u) = 2 + \cos(u) $ and time step $\Delta t = \frac{0.05}{1500}$. The permeability field $\kappa$ and the source term $g_0$ are presented in Figure~\ref{NRfig25}. 
We show the reference solution, implicit CEM solution (with additional basis) and partially explicit solution at $t=T$ in Figure~\ref{NRfig26}.
The relative $L^2$ error plot and the relative energy error plot are presented in Figure~\ref{NRfig27}.
From Figure~\ref{NRfig27}, we see that the curves for implicit CEM (with additional basis) and partially explicit scheme coincide, which implies that they have similar accuracy.
\begin{figure}[H]
\centering
\subfigure{
\includegraphics[width = 6cm]{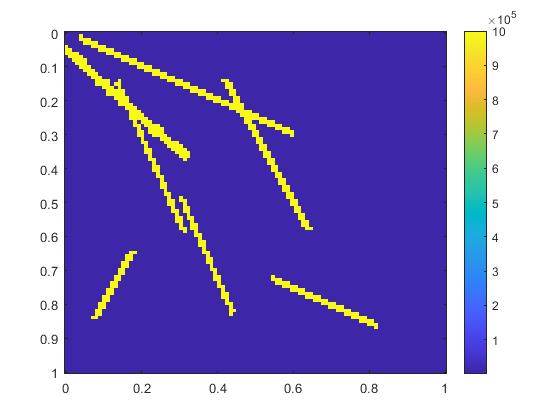}
}
\subfigure{
\includegraphics[width = 6cm]{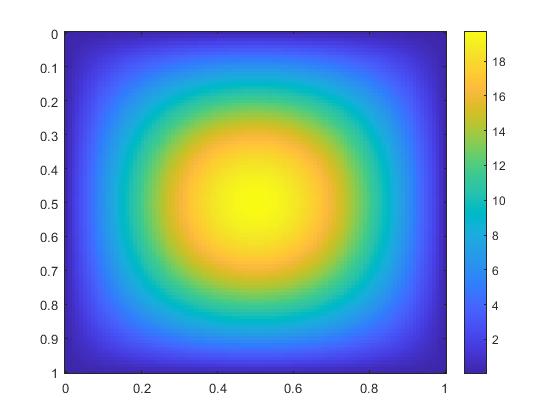}
}
\caption{Left: $\kappa$. Right: $g_0$.}
\label{NRfig25}
\end{figure}

\begin{figure}[H]
\centering
\subfigure{
\includegraphics[width = 5cm]{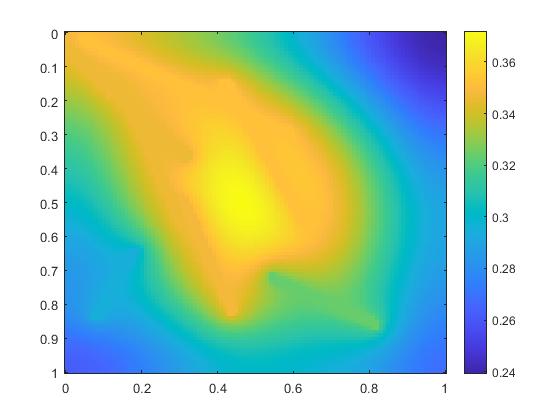}
}
\subfigure{
\includegraphics[width = 5cm]{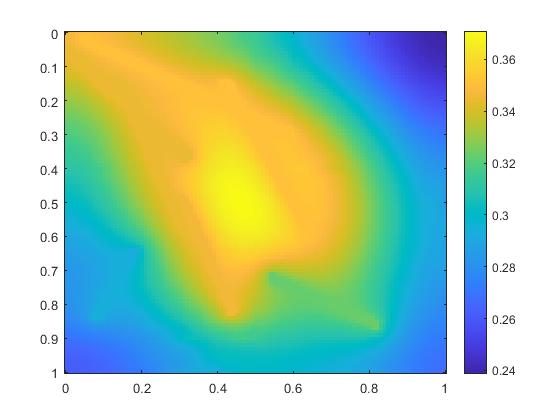}
}
\subfigure{
\includegraphics[width = 5cm]{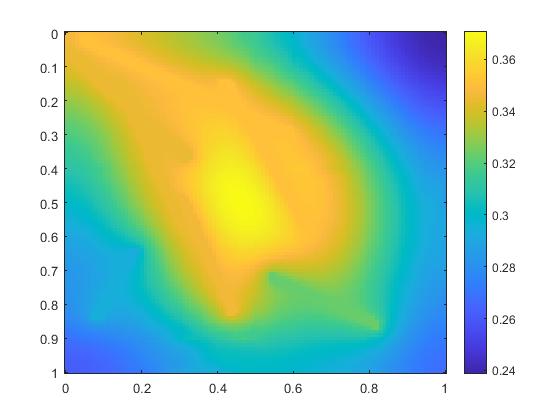}
}
\caption{Left: Reference solution at $t=T$. 
Middle: Implicit CEM solution (with additional basis) at $t=T$.
Right: Partially explicit solution at $t=T$.}
\label{NRfig26}
\end{figure}

\begin{figure}[H]
\centering
\subfigure{
\includegraphics[width = 6cm]{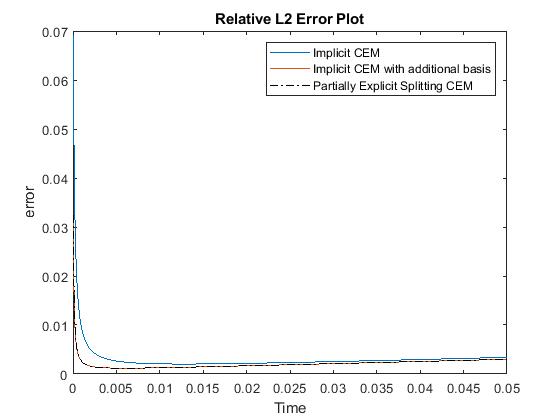}
}
\subfigure{
\includegraphics[width = 6cm]{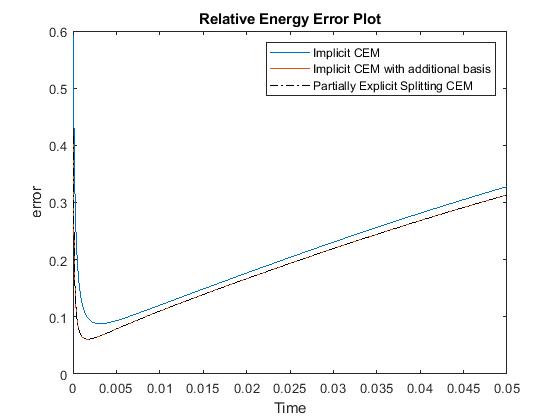}
}
\caption{Left: Relative $L^2$ error. 
Right: Relative energy error.}
\label{NRfig27}
\end{figure}

\section{Conclusions}

In this work, we design and analyze contrast-independent time
discretization for nonlinear problems. 
The work continues our earlier works on linear problems,
where we propose temporal splitting and associated spatial decomposition
that guarantees a stability. 
We introduce two spatial spaces, the first account for spatial features 
related to fast time scales and the second for spatial features related 
to ``slow'' time scales. We propose time splitting, where the first equation solves for fast components implicitly and the second equation solves for slow components explicitly.
We introduce a condition for multiscale spaces
that guarantees stability of the proposed splitting algorithm.
 Our proposed method is still implicit via mass matrix; however, 
it is explicit in terms of stiffness matrix for the slow component.
 We present numerical results, which show that the proposed methods provide very similar results as fully implicit methods using explicit methods with the time stepping that is independent of the contrast.

\section*{Acknowledgments}

The research of Eric Chung is partially supported by the Hong Kong RGC General Research Fund (Project numbers 14304719 and 14302018) and the CUHK Faculty of Science Direct Grant 2020-21.

\bibliographystyle{abbrv}
\bibliography{references,references4,references1,references2,references3,decSol}

\end{document}